\DeclareFontFamily{U}{mathx}{\hyphenchar\font45}
\DeclareFontShape{U}{mathx}{m}{n}{
      <5> <6> <7> <8> <9> <10>
      <10.95> <12> <14.4> <17.28> <20.74> <24.88>
      mathx10
      }{}
\DeclareSymbolFont{mathx}{U}{mathx}{m}{n}
\DeclareMathAccent{\widecheck}{0}{mathx}{"71}
\DeclareMathAccent{\wideparen}{0}{mathx}{"75}
\providecommand{\U}[1]{\protect \rule{.1in}{.1in}}
\definecolor{ashgrey}{rgb}{0.7, 0.75, 0.81}
\newtheorem{theorem}{Theorem}
\newtheorem{definition}[theorem]{Definition}
\newtheorem{lemma}[theorem]{Lemma}
\newtheorem{notation}[theorem]{Notation}
\newtheorem{proposition}[theorem]{Proposition}
\begin{document}
     \title{Geometries on Polygons in the unit disc}
\author{Charalampos Charitos, Ioannis Papadoperakis \and and Georgios
Tsapogas \\
Agricultural University of Athens}
\maketitle
     \begin{abstract}
     For a family $\mathcal{C}$ of properly embedded curves in the 2-dimensional
disk $\mathbb{D}^{2}$ satisfying certain uniqueness properties, we consider
convex polygons $P\subset \mathbb{D}^{2}$ and define a metric $d$ on $P$
such that $(P,d)$ is a geodesically complete metric space whose geodesics
are precisely the curves $\left\{ c\cap P\bigm\vert c\in \mathcal{C}\right\}.$

Moreover, in the special case $\mathcal{C}$ consists of all Euclidean lines,
it is shown that $P$ with this new metric is not isometric to any convex
domain in $\mathbb{R} ^{2}$ equipped with its Hilbert metric.

We generalize this construction to certain classes of  uniquely geodesic metric spaces
homeomorphic to $\mathbb{R}^{2}.$\\
\textit{Keywords:} Hilbert Geometry, geodesic metric space.\\
\textit{Subjclass:} 51F99
     \end{abstract}
     \maketitle
\section{Introduction}

Hilbert's $4^{\mathrm{th}}$ problem was asking for a characterization of all metrics on a convex subset  of Euclidean space for which straight lines are geodesics.
Before Hilbert, Beltrami in \cite{Beltrami} had already shown that the
unit disc in the plane, with the Euclidean chords taken as geodesics of
infinite length, is a model of the hyperbolic geometry. However, Beltrami did
not give a formula for this distance, and this led Klein in \cite{Klein}
to express the distance in the unit disc in terms of the cross radio. Over the years,
Hilbert's fourth problem became a very active research
area and Hilbert's metric defined on convex domains using cross ratio played an important role.
It was gradually realized that the discovery of all metrics satisfying Hilbert's problem was not plausible.
Consequently, each metric resolving Hilbert's problem defines a new
geometry worth to be studied. A very important class of such metrics,
defined by means of the cross ratio, are referred to as \textit{Hilbert
metrics} and play a central role in this research area, see \cite{Tro} for the origin of Hilbert geometry.

Among the prominent mathematicians worked on the Hilbert's fourth problem, it
is worthy to mention Busemann and Pogorelov, see for instance \cite{Buseman},
\cite{Pogorelov1}, \cite{Pogorelov2}. The ideas of the latter to solve
Hilbert's fourth problem came from Busemann, who introduced
integral geometry techniques to approach Hilbert's problem.

Hilbert's $4^{\mathrm{th}}$ problem admits various formulations as well as generalizations.
One of them is to find metrics on subsets of the plane with prescribed geodesics. Blaschke and  Bol in \cite{BB} were the first to consider such problems while Busemann and Salzman \cite{BS} focused on the whole Euclidean plane.

Convex polytopes constitute an important class of convex domains whose Hilbert geometry has been extensively studied, see for example \cite{CVV}, \cite{Ver}.  In this work we focus on convex polygons in the unit disk $\mathbb{D}^{2}$ in $\mathbb{R}^{2}.$  We consider a class $\mathcal{C}$ of continuous curves in $\mathbb{D}^{2}\cup\partial \mathbb{D}^{2}$ satisfying natural assumptions (see properties (C1)-(C3) below) analogous to those satisfied by the class of geodesics in a uniquely geodesic metric space. Then for any convex polygon $P\subset \mathbb{D}^{2}$ we  explicitly construct a metric on $P,$ in fact a family of metrics, such that $(P,d)$ is a geodesically complete metric space whose geodesics are precisely the curves $\left\{ c\cap P\bigm\vert c\in \mathcal{C}\right\}.$ The construction uses a family of pseudo-metrics on  $P,$ one pseudo-metric for each boundary point in $ \partial \mathbb{D}^{2}.$ This is done in Section 2.

In Section 3 we show that the same construction works for proper uniquely geodesic metric spaces homeomorphic to $\mathbb{R}^2 .$ In Section 4 we show that when  $\mathcal{C}$ is just the class of the straight lines in $\mathbb{D}^2 ,$ the construction of the metric $d$ on  $P$ gives rise to a Hilbert geometry, that is, straight lines are precisely the geodesics with respect to $d, $ and yet, $(P,d)$ as a metric space is not isometric to any convex domain in $\mathbb{R} ^{2}$ equipped with the standard Hilbert metric defined via cross ratio.

\section{Definitions and Preliminaries}

Let $\mathbb{D}^{2}$ be the (open) unit disk in $\mathbb{R}^{2}$ and let $%
\mathcal{C}$ be a family of continuous and injective maps $I\rightarrow\mathbb{D}^{2}\cup
\partial \mathbb{D}^{2}$ where $I$ is a close interval in $\mathbb{R},$  the endpoints of $I$ are mapped
in $ \partial \mathbb{D}^{2}$ and the interior of $I$ is mapped into $\mathbb{D}^{2}.$
Assume that $\mathcal{C}$  satisfies the following properties:

\begin{enumerate}
\item[(C1)] for any two points $x,y\in \mathbb{D}^{2}$ there exists a unique
curve $c_{xy}\in \mathcal{C}$ containing both $x$ and $y.$ The restriction
of $c_{xy}$ with endpoints $x$ and $y$ will be called the segment from $x$
to $y$ and will be denoted by $\left[ x,y\right] .$

\item[(C2)] for any two points $\xi ,\eta \in \partial \mathbb{D}^{2}$ there
exists a unique curve $c_{\xi \eta }\in \mathcal{C}$ with endpoints $\xi $
and $\eta .$ We call $c_{\xi \eta }$ the line from $\xi $ to $\eta $ and
denote it by $\left( \xi ,\eta \right) .$

\item[(C3)] for any $x\in $ $\mathbb{D}^{2}$ and $\xi \in \partial \mathbb{D}%
^{2}$ there exists a unique curve $c_{x\xi }\in \mathcal{C}$ having $\xi $
as one endpoint and containing $x.$ The restriction of $c_{x\xi }$ with
endpoints $x$ and $\xi $ will be called the ray from $x$ to $\xi $ and will
be denoted by $\left[ x,\xi \right) .$
\end{enumerate}

Examples of such families include the geodesic lines in the hyperbolic disk as well as homeomorphic images of these.

From the above properties it easily follows that for any two curves $%
c,c^{\prime }\in \mathcal{C}$ either $c\cap c^{\prime }=\varnothing $ or, $%
c\cap c^{\prime }$ is a singleton. In particular, the same holds for any two
segments. 

Let $\theta _{1},\theta _{2},\theta _{3},\theta _{4}, \theta_1$ be four points in $%
\partial \mathbb{D}^{2}$ in cyclic clockwise order. Each pair of points $%
\theta _{i},\theta _{j}$ $i\neq j$ determines exactly two subarcs of $%
\partial \mathbb{D}^{2}.$ Denote by

\begin{description}
\item $\wideparen{\theta_{1}\theta_{2}} \equiv \wideparen{\theta_{2}\theta_{1}} $  the subarc not containing  $\theta_{3},\theta_{4},$

\item $\wideparen{\theta_{3}\theta_{4}} \equiv \wideparen{\theta_{4}\theta_{3}} $  the subarc not containing $\theta_{1},\theta_{2},$

\item $\wideparen{\theta_{2}\theta_{4}} \equiv \wideparen{\theta_{4}\theta_{2}} $ the subarc not containing $\theta_{1},$

\item $\wideparen{\theta_{1}\theta_{3}} \equiv \wideparen{\theta_{3}\theta_{1}} $ the subarc not containing $\theta_{4}.$
\end{description}

\begin{figure}[ptb]
\begin{center}
\includegraphics[scale=1.55]{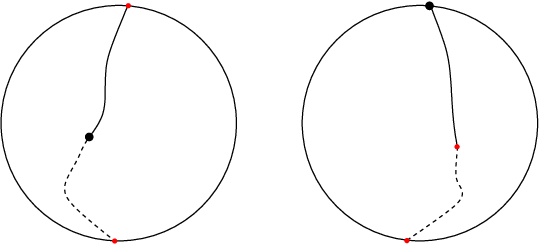}
\hspace*{14.3cm}\begin{picture}(0,0)
\put(-87,195){$\theta$}
\put(-312,194){$\theta$}
\put(-334,86){$x$}
\put(-76,85){$x$}
\put(-336,1){$f_x \left( \theta \right) = f \left( x, \theta \right) = x_{\theta}$}
\put(-112,1){$f_{\theta} \left( y \right) = f \left( y, \theta \right) = y_{\theta}$}
\end{picture}\\[7mm]
\end{center}
\par
\label{figdef1}
\caption{The projection maps $f_{x}:\partial \mathbb{D}^{2}\rightarrow \partial \mathbb{D}^{2}$ and $f_{\theta }:\mathbb{D}^{2}\rightarrow \partial \mathbb{D}^{2}.$ }
\end{figure}

In the sequel we will also deal with arcs determined by a triple of points  $\theta _{1},\theta _{2},\theta _{3}$ in $
\partial \mathbb{D}^{2}.$ In this case $\wideparen{\theta_{1}\theta_{2}} \equiv \wideparen{\theta_{2}\theta_{1}} $ is  the subarc not containing  $\theta_{3}$  and similarly for $\wideparen{\theta_{1}\theta_{3}},$  $\wideparen{\theta_{2}\theta_{3}}.$

Clearly, each $c\in \mathcal{C}$ splits $\mathbb{D}^{2}$ into two
components. We will say that two curves $c,c^{\prime }\in \mathcal{C}$
intersect transversely (by (C1), at a single point)  if $c$  intersects both components of $
\mathbb{D}^{2}$ determined by $c^{\prime }.$ The following property follows from
(C1)-(C3): 

\begin{enumerate}
\item[(C4)] all intersections between curves in $\mathcal{C}$ are transverse.
\end{enumerate}

To see this assume, on the contrary, that the lines $\left( \xi ,\xi
^{\prime }\right) $ and $\left( \eta ,\eta ^{\prime }\right) $ intersect at
a (single) point $x\in \mathbb{D}^{2}$ and the intersection is not
transverse. We may assume that the cyclic clockwise order of the boundary points of these
lines is $\eta ,\xi ,\xi ^{\prime },\eta ^{\prime },\eta .$
For any points $\theta \in \wideparen{\eta \xi }$ and $\theta ^{\prime }\in
\wideparen{\eta ^{\prime }\xi ^{\prime }}$ we claim that the line $\left( \theta ,\theta
^{\prime }\right) $ necessarily contains $x.$ To check this observe that the union $c_{x\eta}\cup c_{x\xi}$ separates $\mathbb{D}^{2}\cup\partial \mathbb{D}^{2}$ into two components, one containing $\theta$ and the other $\theta ^{\prime }.$ If $x\notin \left( \theta ,\theta^{\prime }\right)$ then $\left( \theta ,\theta^{\prime }\right)$ must intersect transversely the interior of either $c_{x\eta}$ or $ c_{x\xi}.$
 Without loss of generality, assume that $\left( \theta ,\theta^{\prime }\right)$ intersects transversely the interior of $c_{x\eta}$ at a point, say, $z_1 .$ The line $\left( \eta ,\eta ^{\prime }\right) $ separates $\mathbb{D}^{2}\cup\partial \mathbb{D}^{2}$ into two components and both  $ \theta ,\theta^{\prime }$ belong to one of them. It follows that  $\left( \theta ,\theta^{\prime }\right)$ must intersect $\left( \eta ,\eta ^{\prime }\right) $ at an other point, say, $z_2.$ Then the points $z_1 ,z_2$ belong to  $\left( \theta ,\theta^{\prime }\right)$ as well as to $\left( \eta ,\eta ^{\prime }\right) $ which violates property (C1). This shows that $x\in \left( \theta ,\theta^{\prime }\right).$\\
 To complete the proof of (C4) consider a point $\theta ^{\prime \prime }\in \wideparen{\eta
^{\prime }\xi ^{\prime }}$ with $\theta ^{\prime \prime } \neq \theta ^{\prime }.$ Then we would have  $x\in \left( \theta ,\theta ^{\prime
}\right) \cap $ $\left( \theta ,\theta ^{\prime \prime }\right) $ and, by  property (C3), $\left( \theta ,\theta ^{\prime }\right) \equiv\left( \theta ,\theta ^{\prime \prime }\right) $ which, by (C2), implies that $\theta ^{\prime \prime } = \theta ^{\prime },$ a contradiction.

We will also need the following property which follows immediately from property (C3):
\begin{enumerate}
\item[(C5)] for pairwise distinct points $\theta, \theta_1 , \ldots \theta_{m} $ in $\partial \mathbb{D}^2 ,$
$\left( \theta,\theta_i \right) \cap \left( \theta ,\theta_j \right) =\emptyset$ if $i\neq j.$
\end{enumerate}

Denote by $\left\vert \wideparen{\theta _{i}\theta _{j}}\right\vert $ the
Euclidean length of the arc $\wideparen{\theta _{i}\theta _{j}}$ and define
the ratio $\left[ \theta _{1},\theta _{2},\theta _{3},\theta _{4}\right] $
of the points $\theta _{1},\theta _{2},\theta _{3},\theta _{4}$ by 
\begin{equation}
\left[ \theta _{1},\theta _{2},\theta _{3},\theta _{4}\right] :=\frac{%
\left\vert \wideparen{\theta _{1}\theta _{3}}\right\vert \left\vert \wideparen{%
\theta _{4}\theta _{2}}\right\vert }{\left\vert \wideparen{\theta _{1}\theta
_{2}}\right\vert \left\vert \wideparen{\theta _{4}\theta _{3}}\right\vert }.
\label{starTHETAA}
\end{equation}

We will say that $X$ is a convex subset of $\mathbb{D}^{2}$ if for any two
points $x,y\in X$ the segment $\left[ x,y\right] $ is entirely contained in $%
X.$ We say that $P$ is a convex polygon in $\mathbb{D}^{2}$ if $P$ is an
open convex subset of $\mathbb{D}^{2}$ whose boundary is a finite union of
segments.

\begin{figure}[ptb]
\begin{center}
\includegraphics[scale=2.2]{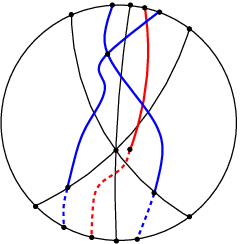}
\begin{picture}(0,0)
\put(-122,257){$\overline{\theta}$}
\put(-107,254){$ \theta_{n}$}
\put(-146,259){$ \xi_{2}$}\put(-90,250){$ \xi_{1}$}
\put(-60,233){$ \eta_{1}$}\put(-189,248){$ \eta_{2}$}
\put(-154,199){$ z $}
\put(-147,96){$\overline{x}$} \put(-117,97){$x_n$}
\put(-90,54){$ z_{1}$}\put(-198,62){$ z_{2}$}
\put(-53,20){$ \overline{x}_{\eta_2} $}\put(-235,34){$ \overline{x}_{\eta_1} $}
 \put(-200,8){$ z_{\xi_1}$}

\put(-139,-8){$\overline{x}_{\overline{\theta}}$}
\put(-116,-6){$z_{\xi_2}$}
\put(-168,-2){${x_{n}}_{\theta_{n}} $}
\end{picture}\\[7mm]
\end{center}
\par
\label{thetax}
\caption{Notation for the proof of Lemma \ref{contin}.  }
\end{figure}

\section{A metric with prescribed geodesics\label{sectionPrescribed}}

Our goal is to define a metric $d$ on a convex polygon $P$ such that the
metric space $(P,d)$ is a geodesically complete metric space whose geodesics
are precisely $\left\{ c\cap P\bigm\vert c\in \mathcal{C}\right\} .$ We note here that the word geodesic
has the classical meaning, namely, the image of an isometric map $\mathbb{R}\rightarrow P.$ \\
We will
be using the terminology segment, ray and line as introduced in properties
(C1), (C2) and (C3).

Define a function
\begin{equation*}
f:\mathbb{D}^{2}\times \partial \mathbb{D}^{2}\rightarrow \partial \mathbb{D}%
^{2}
\end{equation*}%
as follows: for $(x,\theta )\in \mathbb{D}^{2}\times \partial \mathbb{D}^{2}$
there exists, by property (C3), a unique curve $c_{x\theta }$ containing $x$
and having $\theta $ as one endpoint. Set $f\left( x,\theta \right) $ to be
the other endpoint of $c_{x\theta }.$ \newline
For fixed $x\in \mathbb{D}^{2}$ we denote by $f_{x}$ the induced map
\begin{equation*}
f_{x}:\partial \mathbb{D}^{2}\rightarrow \partial \mathbb{D}^{2}\mathrm{\
given\ by\ }f_{x}(\theta ):=f(x,\theta ),
\end{equation*}
see Figure 1. Similarly, for fixed $\theta \in \partial \mathbb{D}^{2}$ we have the
induced map
\begin{equation*}
f_{\theta }:\mathbb{D}^{2}\rightarrow \partial \mathbb{D}^{2}\mathrm{\
given\ by\ }f_{\theta }(x):=f(x,\theta ).
\end{equation*}%
%

\begin{notation}
We will be writing $x_{\theta }$ instead of $f\left( x,\theta \right) $ and
we will be calling $x_{\theta }$ the projection from $\theta $ of $x$ to the boundary $
\partial \mathbb{D}^{2} .$
\end{notation}

\begin{lemma}
\label{contin} The function $f: \mathbb{D}^{2}\times \partial \mathbb{D}%
^{2}\rightarrow\partial \mathbb{D}^{2}$ defined above is continuous.
\end{lemma}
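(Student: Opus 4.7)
The plan is to argue by contradiction. Suppose $(x_n,\theta_n)\to(\bar x,\bar\theta)$ while $x_{n\theta_n}\not\to\bar x_{\bar\theta}$; by compactness of $\partial\mathbb{D}^2$, I may pass to a subsequence with $x_{n\theta_n}\to\bar y\neq\bar x_{\bar\theta}$. I would then split according to whether $\bar y=\bar\theta$ or $\bar y\neq\bar\theta$.

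If $\bar y=\bar\theta$, the idea is to trap the curves $c_{x_n\theta_n}$ inside a small lens near $\bar\theta$. Choose $\mu_1,\mu_2\in\partial\mathbb{D}^2$ flanking $\bar\theta$ so closely that $\bar x$ lies outside the closed region $\overline{R_0}$ bounded by the line $(\mu_1,\mu_2)$ and the short arc $\wideparen{\mu_1\mu_2}$ through $\bar\theta$. For large $n$, both endpoints $\theta_n$ and $x_{n\theta_n}$ of $c_{x_n\theta_n}$ lie on $\wideparen{\mu_1\mu_2}$. By (C1), $c_{x_n\theta_n}$ meets the line $(\mu_1,\mu_2)$ at most once, and since both of its endpoints are on the same side of this line, it cannot cross at all; hence $c_{x_n\theta_n}\subset\overline{R_0}$. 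Then $x_n\in\overline{R_0}$ forces $\bar x\in\overline{R_0}$, contradicting the choice of $\mu_1,\mu_2$.

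If $\bar y\neq\bar\theta$, the argument requires an auxiliary intermediate point. Pick $z$ on the ray $[\bar x,\bar\theta)$ strictly between $\bar x$ and $\bar\theta$, and choose $\eta_1,\eta_2\in\partial\mathbb{D}^2$ so that the short arc $\wideparen{\eta_1\eta_2}$ contains $\bar x_{\bar\theta}$ but excludes both $\bar\theta$ and $\bar y$ (possible since $\bar y\neq\bar x_{\bar\theta}$). Setting $\xi_i:=z_{\eta_i}$, the three lines $c_{z\eta_1}$, $c_{z\eta_2}$, and $c_{\bar x\bar\theta}=c_{z\bar x_{\bar\theta}}$ all pass through $z$ and, by (C1) and (C5), meet pairwise only at $z$. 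A cyclic-order argument (the endpoints of any one of these lines separate those of any other on $\partial\mathbb{D}^2$) then pins the cyclic order of their six boundary endpoints as $\xi_1,\bar\theta,\xi_2,\eta_1,\bar x_{\bar\theta},\eta_2$; consequently, $\theta_n\in\wideparen{\xi_1\xi_2}$ for all large $n$.

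These three lines partition $\mathbb{D}^2$ into six open sectors at $z$, and for large $n$, $x_n$ lies in one of the two sectors adjacent to the half-line from $z$ toward $\bar x_{\bar\theta}$. By (C1), $c_{x_n\theta_n}$ meets each of the three auxiliary lines at most once, so tracing the portion of $c_{x_n\theta_n}$ from $\theta_n$ through $x_n$ uses up exactly one crossing with each line. After passing through $x_n$, the curve is trapped in its current sector and must exit to $\partial\mathbb{D}^2$ along the arc $\wideparen{\eta_1\eta_2}$ containing $\bar x_{\bar\theta}$. This forces $x_{n\theta_n}\in\wideparen{\eta_1\eta_2}$ for all large $n$, contradicting $x_{n\theta_n}\to\bar y\notin\wideparen{\eta_1\eta_2}$. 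I expect the main obstacle to be rigorously deriving the cyclic-order claim from (C1)--(C5) and carefully accounting for every possible crossing pattern, with degenerate configurations such as $x_n\in c_{\bar x\bar\theta}$ dispatched by extracting a further subsequence.
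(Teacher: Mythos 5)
Your proposal is correct and takes essentially the same approach as the paper: argue by contradiction and use property (C1) (each auxiliary line of $\mathcal{C}$ is met at most once) to trap $c_{x_n\theta_n}$ so that its far endpoint is forced into a prescribed small arc about $\overline{x}_{\overline{\theta}}$. The only differences are cosmetic --- the paper traps the curve with two lines bounding a wedge region containing $\overline{x}$ in its interior, whereas you use three concurrent lines through a point of $[\overline{x},\overline{\theta})$ and a six-sector count; moreover your Case 1 is redundant, since the construction of Case 2 (choosing the short arc about $\overline{x}_{\overline{\theta}}$ to exclude $\overline{\theta}$) already covers $\overline{y}=\overline{\theta}$.
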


\begin{proof} For the reader's convenience, all  notation introduced in this proof is included in  Figure 2.

Assume $f$ is not continuous at a point $(\overline{x},\overline{\theta }%
)\in \mathbb{D}^{2}\times \partial \mathbb{D}^{2}.$ Then, there must exist a
positive real $\varepsilon _{0}$ such that
\begin{equation}
\forall n\in \mathbb{N},\exists \left( x_{n},\theta _{n}\right) \mathrm{\
satisfying\ }\left\vert \wideparen{ \overline{\theta}\, \theta _{n}}%
\right\vert <\frac{1}{n},\left\vert \overline{x}-x_{n}\right\vert <\frac{1}{n%
}\mathrm{\ and\ }\left\vert \wideparen{\overline{x}_{\overline{\theta }}\,
{x_n}_{\theta _{n}} }\right\vert \geq \varepsilon _{0}.  \label{epsilonNOT}
\end{equation}
where the arc $ \wideparen{ \overline{\theta}\, \theta _{n}} $ is the subarc of $\partial\mathbb{D}^2$ not containing $ \overline{x}_{\overline{\theta }}$ and
$ \wideparen{\overline{x}_{\overline{\theta }}\,{x_n}_{\theta _{n}} }$ the subarc not containing $\overline{\theta}.$

There are exactly two boundary points in $\partial \mathbb{D}^{2}$ each of
which determines, along with $\overline{x}_{{\overline{\theta }}},$ an arc of
Euclidean length $\varepsilon _{0}.$ In other words, there exist $\eta
_{1},\eta _{2}$ in $\partial \mathbb{D}^{2}$ such that the projections $%
\overline{x}_{\eta _{1}},\overline{x}_{\eta _{2}}$ satisfy
\begin{equation}
\left\vert \wideparen{ \overline{x}_{{\overline{\theta }}} \,
\overline{x}_{\eta_1} }\right\vert =\varepsilon _{0}=\left\vert \wideparen{
\overline{x}_{{\overline{\theta }}} \, \overline{x}_{\eta_2} }\right\vert
\label{epsilonNOTNOT}
\end{equation}%
where,  $ \wideparen{ \overline{x}_{{\overline{\theta }}} \, \overline{x}_{\eta_1} } ,
\wideparen{\overline{x}_{{\overline{\theta }}} \, \overline{x}_{\eta_2} }
$ are the subarcs not containing $\overline{\theta}.$
Let $\wideparen{\eta_1 \, \eta_2}$ be the subarc of $\partial \mathbb{D}^{2}$ which has endpoints $\eta_1 \, \eta_2$ and does not contain $\overline{x}_{\eta_1}, \overline{x}_{\eta_2}.$ Clearly, by transversality of the intersection of the lines $c_{\overline{x}%
\,\overline{\theta }},c_{\overline{x}\,\eta _{1}},c_{\overline{x}\,\eta
_{2}} $ the arc $\wideparen{\eta_1 \, \eta_2}$ contains $\overline{\theta }.$

Let $ \wideparen{ \overline{\theta}\, \eta_1}$ be the subarc not containing $\overline{x}_{\eta_1}, \overline{x}_{\eta_2} $ and similarly we specify $ \wideparen{ \overline{\theta}\, \eta_2}.$
Pick arbitrary points $\xi _{1}\in \wideparen{ \overline{\theta}\, \eta_1},$   $\xi
_{2}\in \wideparen{ \overline{\theta}\, \eta_2},$  $z_1 \in  \left[\overline{x} ,\overline{x}_{\eta_2}\right) $ and $z_2 \in  \left[\overline{x} ,\overline{x}_{\eta_1}\right) $ with $z_1 \neq \overline{x} \neq z_2 .$
The
rays $\left[ z_{1},\xi _{2}\right) $ and $\left[ z_{2},\xi _{1}\right) $
intersect at a point, say, $z$ and $\overline{x}$ is contained in the region
$R_{z}$ bounded by the rays $\left[ z,z_{\xi _{1}}\right) ,\left[ z,z_{\xi
_{2}}\right) $ and the arc $\wideparen{z_{\xi_1} \, z_{\xi_2}}$ where the latter is the subarc not containing $\xi_1 , \xi_2.$

Without loss of generality and by choosing, if necessary, a subsequence we
may assume that the sequence $\left\{ \theta _{n}\right\} \subseteq %
\wideparen{ \xi_1\, \xi_2}$ and the sequence $\left\{ x_{n}\right\} $ is
contained in the region $R_{z}.$ Now for any $\left( x_{n},\theta
_{n}\right) \in \wideparen{ \xi_1\, \xi_2}\times R_{z}$ we have that $%
c_{x_{n}\theta _{n}}$ intersects both lines $\left( \xi _{1},z_{\xi
_{1}}\right) ,\left( \xi _{2},z_{\xi _{2}}\right) $ at exactly one point and, thus, the
projection $x_{n}{}_{\theta _{n}}$ is contained in $%
\wideparen{z_{\xi_1} \,
z_{\xi_2}}.$ As $\wideparen{z_{\xi_1} \, z_{\xi_2}}\subset
\wideparen{
\overline{x}_{\eta_1}   \, \overline{x}_{\eta_2}   }$ we have, by (\ref
{epsilonNOTNOT}), $\left\vert
\wideparen{  \overline{x}_{{\overline{\theta
}}} \, {x_{n}}_{\theta_{n}}   }\right\vert <\varepsilon _{0}$ which
contradicts (\ref{epsilonNOT}).
\end{proof}

\begin{figure}[ptb]
\begin{center}
\includegraphics[scale=1.15]{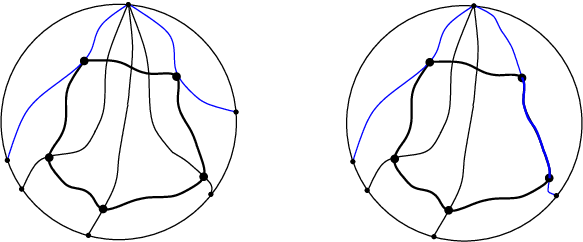}
\begin{picture}(0,0)
\put(-259,134){$\theta$}\put(-68,134){$\theta$}
\put(-375,40){$g_1(\theta)=\theta_1$}
\put(-193,70){$\theta_2= g_2(\theta)$}
\put(-184,42){$ g_1(\theta)=\theta_1$}
\put(-20,14){$\theta_2 =\theta_3 = g_2(\theta)$}
\put(-322,18){$\theta_k$}\put(-129,18){$\theta_k$}
\put(-286,-7){$\theta_{k-1}$}\put(-94,-7){$\theta_{k-1}$}
\put(-208,17){$\theta_3$}
\put(-289,104){$v_1$}\put(-97,104){$v_1$}
\put(-225,91){$v_2$}\put(-32,91){$v_2$}
\put(-212,38){$v_3$}\put(-20,37){$v_3$}
\put(-309,51){$v_k$}\put(-117,51){$v_k$}
\end{picture}\\[7mm]
\end{center}
\par
\label{figdef3}
\caption{The functions $g_1 ,g_2 :$ on the right is the case where the line $ \left( \theta, \theta_2\right)$ contains the vertex $v_3$ and, hence, the side $[v_2,v_3]$}
\end{figure}

Denote by $v_i, i=1,\ldots , k$ the vertices of $P$ and fix a point $\theta \in \partial \mathbb{D}^{2}.$
For each $i$ denote by $\left( \theta, \theta_i \right) $ the line containing $v_i$ (see Figure 3). Observe that for $i\neq j$ it may happen that the line $\left( \theta, \theta_i \right) $ contains a vertex $v_j, i\neq j$ in which case $\left( \theta, \theta_i \right) =\left( \theta, \theta_j \right) .$ Otherwise, by property (C5),
$\left( \theta, \theta_i \right) \cap \left( \theta, \theta_j \right) =\emptyset. $ In particular, the above lines are at most $k$ and at least $2.$ Since each $ \left( \theta, \theta_i \right)$ separates $ \mathbb{D}^{2}$
into two components there exist exactly two of them, say,  $ \left( \theta, \theta_1 \right)$ and $ \left( \theta, \theta_2 \right)$ which are outermost in the following sense: one component of $ \mathbb{D}^{2}\setminus \left( \theta, \theta_1 \right)$ contains all lines $ \left( \theta, \theta_i\right), i\neq 1$ and the other component contains none and similarly for $ \left( \theta, \theta_2 \right).$ Clearly, the line $ \left( \theta, \theta_1\right)$ either contains only the vertex $v_1$ or, by convexity of $P,$ contains a segment
$\left[ v_1 ,v_1^{\prime}\right] .$ In the former case, $\theta_1$ is  just  $f_{v_1} (\theta).$ The same holds for  $ \left( \theta, \theta_2 \right).$

The above discussion shows that for each $\theta \in \partial \mathbb{D}^{2}$ there exist  unique points $g_{1}\left( \theta\right) ,g_{2}\left( \theta \right) $ in $\partial\mathbb{D}^{2}$  such that the region bounded by the lines $\left( \theta,g_{1}\left( \theta \right) \right) ,$ $\left( \theta ,g_{2}\left( \theta
\right) \right) $ and the arc $\wideparen{ g_{1}\left( \theta\right)
g_{2}\left( \theta\right) }$ contains $P$ and is minimal with respect to the
property of containing $P.$ The arc $\wideparen{ g_{1}\left( \theta\right)g_{2}\left( \theta\right) }$ is meant to be the subarc with endpoints $g_{1}\left( \theta\right), g_{2}\left( \theta\right) $ not containing $\theta.$

For simplicity, we will be writing $\xi _{\theta
}$ and $\eta _{\theta }$ instead of $g_{1}\left( \theta \right) $ and $%
g_{2}\left( \theta \right) $ respectively.

\begin{figure}[ptb]
\begin{center}
\includegraphics[scale=1.55]{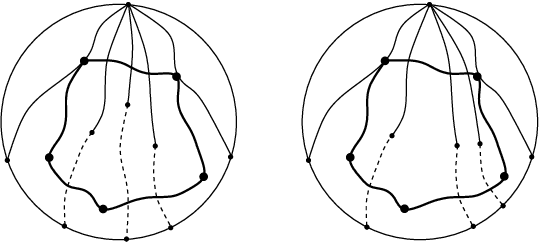}
\hspace*{14.2cm}
\begin{picture}(0,0)
\put(-87,193){$\theta$}
\put(-312,193){$\theta$}
\put(-188,65){$ \xi_{\theta}$}
\put(-412,65){$ \xi_{\theta}$}
\put(-5,70){$ \eta_{\theta}$}
\put(-230,70){$ \eta_{\theta}$}
\put(-53,14){$y_{\theta}$}
\put(-283,14){$y_{\theta}$}
\put(-143,16){$x_{\theta}$}
\put(-370,16){$x_{\theta}$}
\put(-122,92){$x$}
\put(-347,92){$x$}
\put(-74,82){$y$}
\put(-300,82){$y$}
\put(-44,83){$ z $}
\put(-26,33){$ z_{\theta}$}
\put(-321,6){$ z_{\theta}$}
\put(-308,113){$ z $}
\put(-99,-17){Case B}\put(-315,-17){Case  A}
\end{picture}\\[7mm]
\end{center}
\par
\label{figdef4}
\caption{The arcs defining the cross  ratio $\left[ \protect\eta_{\protect\theta
},y_{\protect\theta},x_{\protect\theta},\protect\xi_{\protect\theta}\right] $}
\end{figure}

\begin{lemma}
\label{continn} The maps $g_{i}:\partial \mathbb{D}^{2}\rightarrow \partial
\mathbb{D}^{2},i=1,2$ are continuous.
\end{lemma}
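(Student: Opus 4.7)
The plan is to reduce continuity of $g_1$ (and symmetrically $g_2$) to the continuity of $f$ proved in Lemma \ref{contin}. The key observation is that the outermost line $\bigl(\theta,g_1(\theta)\bigr)$ must contain at least one vertex of $P$ — either as a supporting line at a single vertex, or as the line carrying an entire side of $P$ — so in every case
\[
g_1(\theta)\in\bigl\{f_{v_1}(\theta),\ldots,f_{v_k}(\theta)\bigr\}\qquad\text{for every }\theta\in\partial\mathbb{D}^2.
\]

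Fix $\bar\theta\in\partial\mathbb{D}^2$ and let $\theta_n\to\bar\theta$. By compactness of $\partial\mathbb{D}^2$ it is enough to show that every convergent subsequence of $\{g_1(\theta_n)\}$ has limit $g_1(\bar\theta)$. For each $n$ choose a vertex $v_{j_n}$ with $g_1(\theta_n)=f_{v_{j_n}}(\theta_n)$, and by the pigeonhole principle pass to a subsequence on which $j_n=j_0$ is constant. Lemma \ref{contin} applied to $f_{v_{j_0}}$ then yields
\[
g_1(\theta_n)=f_{v_{j_0}}(\theta_n)\longrightarrow f_{v_{j_0}}(\bar\theta).
\]

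It remains to identify $f_{v_{j_0}}(\bar\theta)$ with $g_1(\bar\theta)$ and not with $g_2(\bar\theta)$. By the outermost property, for each $i$ the projection $f_{v_i}(\theta_n)$ lies on the closed arc $\wideparen{g_1(\theta_n)\,g_2(\theta_n)}$ not containing $\theta_n$. Passing to a further subsequence along which $g_2(\theta_n)\to\eta^{\ast}$ and applying Lemma \ref{contin} once more, each $f_{v_i}(\bar\theta)$ lies on the closed arc of $\partial\mathbb{D}^2$ from $f_{v_{j_0}}(\bar\theta)$ to $\eta^{\ast}$ not containing $\bar\theta$. This makes $\bigl(\bar\theta,f_{v_{j_0}}(\bar\theta)\bigr)$ an outermost line from $\bar\theta$, so $f_{v_{j_0}}(\bar\theta)\in\{g_1(\bar\theta),g_2(\bar\theta)\}$. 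To select the correct one I fix a continuous labeling convention — for instance, $g_1(\theta)$ is the outermost endpoint encountered first moving clockwise from $\theta$ along $\partial\mathbb{D}^2$ — and observe that with this choice the cyclic order of $\theta_n,\ g_1(\theta_n),\ g_2(\theta_n)$ is locally constant, which rules out a swap in the limit.

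The main obstacle I expect is this last labeling step; the extraction argument together with Lemma \ref{contin} supplies the analytical content, but careful bookkeeping of the cyclic order of the two outermost endpoints is needed to ensure $g_1$ and $g_2$ do not interchange. This relies on the fact that $g_1(\bar\theta)\neq g_2(\bar\theta)$, which holds because a convex polygon $P\subset\mathbb{D}^2$ has nonempty interior and hence subtends a proper arc from every boundary point $\bar\theta\in\partial\mathbb{D}^2$.
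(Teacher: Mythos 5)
Your argument is correct in substance but takes a genuinely different route from the paper's. The paper argues locally: away from the finitely many directions $\theta$ that are endpoints of a line carrying a whole side of $P$, the outermost line through $\theta$ passes through a single vertex $v$ that does not change in a neighbourhood of $\theta$, so $g_{1}=f_{v}$ locally and continuity follows from Lemma \ref{contin}; at a transition direction $\theta$, where the outermost line contains a side $[v,w]$, it notes $f_{v}(\theta)=f_{w}(\theta)$ and that $g_{1}$ agrees with $f_{v}$ on one side of $\theta$ and with $f_{w}$ on the other, so the one-sided limits match. Your subsequence extraction replaces this local case analysis with a compactness argument, which buys you the advantage of not having to justify that the realizing vertex is locally constant away from transitions (a point the paper asserts rather than proves), at the price of the cyclic-order bookkeeping at the end. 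Two small points to tighten in your version: (i) before you can speak of ``the closed arc from $f_{v_{j_0}}(\bar\theta)$ to $\eta^{\ast}$ not containing $\bar\theta$'' you need the two limits $f_{v_{j_0}}(\bar\theta)$ and $\eta^{\ast}$ to be distinct; this is not the same as $g_{1}(\bar\theta)\neq g_{2}(\bar\theta)$, but it does follow because the open nonempty set $P$ lies in each region bounded by $\bigl(\theta_{n},g_{1}(\theta_{n})\bigr)$, $\bigl(\theta_{n},g_{2}(\theta_{n})\bigr)$ and the far arc, and these regions would collapse onto a single line if the two limits coincided. (ii) Your remark that a labelling convention is needed is apt --- the paper only pins down the unordered pair $\{g_{1}(\theta),g_{2}(\theta)\}$ --- and your clockwise convention, combined with the pairwise distinctness of the three limit points $\bar\theta$, $f_{v_{j_0}}(\bar\theta)$, $\eta^{\ast}$, does legitimately rule out the swap.
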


\begin{proof}
If $\theta $ is not the endpoint of a line containing a segment of $\partial
P,$ then, as explained above,  $g_{1}\left( \theta \right) $ is given by some $f_{v_1}$ for a vertex
$v_1 .$ Therefore, $g_{1}$ is continuous at every such $\theta .$ Let now $%
\theta $ be the endpoint of a line containing a segment $\left[ v,w\right] $
of $\partial P$ and $\wideparen{\theta ^{+}\theta
^{-}}$ a sufficiently small neighborhood of $\theta $ in $\partial \mathbb{D}%
^{2}.$ Clearly, $f_{v}\left( \theta \right) =f_{w}\left( \theta \right) $
and either $g_{1}=f_{v}$ on $\wideparen{\theta ^{+}\theta }$ and $%
g_{1}=f_{w} $ on $\wideparen{\theta ^{-}\theta }$ or, $g_{1}=f_{w}$ on $%
\wideparen{\theta ^{+}\theta }$ and $g_{1}=f_{v}$ on $%
\wideparen{\theta
^{-}\theta }.$ In any case $g_{1}$ is continuous at every $\theta \in
\partial \mathbb{D}^{2}$ and identically $g_{2}$ is.
\end{proof}

For any point $\theta \in \partial \mathbb{D}^{2}$ we first define a
pseudo-metric $d_{\theta }$ on $P.$ Given any two points $x,y\in P$ we may
assume, without loss of generality, that the clockwise cyclic order of the
four images of $\theta $ under the functions $f_{x},f_{y}$ and $g_{1},g_{2}$
is $g_{2}\left( \theta \right) ,f_{y}\left( \theta \right) ,f_{x}\left(
\theta \right) ,g_{1}\left( \theta \right) .$ With the simplified notation
introduced above these four points are: $\eta _{\theta },y_{\theta
},x_{\theta },\xi _{\theta }.$ Define the $\theta -$distance of the points $%
x,y$ by
\begin{equation}
d_{\theta }\left( x,y\right) :=\log \left[ \eta _{\theta },y_{\theta
},x_{\theta },\xi _{\theta }\right] =
\log \frac{\left \vert \wideparen{\eta_{\theta}x_{\theta}}\right \vert
\left \vert \wideparen{\xi_{\theta}y_{\theta}}\right \vert }{\left \vert %
\wideparen{\eta_{\theta}y_{\theta}}\right \vert \left \vert %
\wideparen{\xi_{\theta }x_{\theta}}\right \vert }.
\label{starTHETA}
\end{equation}

\begin{lemma}
\label{dTHETA} The distance $d_{\theta}$ is a pseudo-metric on $P.$
\end{lemma}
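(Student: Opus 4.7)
The plan is to observe that $d_\theta$ depends on $(x,y)$ only through the projections $(x_\theta, y_\theta)$, and moreover that it is the pullback of the standard Euclidean pseudo-metric on $\mathbb{R}$ via a single real-valued function on $P$. Once this is established, the three pseudo-metric axioms (non-negativity with $d_\theta(x,x)=0$, symmetry, and the triangle inequality) all follow at once.

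To make this explicit, I parameterize the open arc $\wideparen{\xi_\theta \eta_\theta}$ (the one not containing $\theta$) by Euclidean arc length $t \in [0,L]$, where $L := |\wideparen{\xi_\theta \eta_\theta}|$, so that $t=0$ at $\xi_\theta$ and $t=L$ at $\eta_\theta$. For $x \in P$ set $t_x \in (0,L)$ to be the parameter of $x_\theta$. Tracking each of the four arcs in (\ref{starTHETA}) under this parameterization yields
\[
|\wideparen{\xi_\theta x_\theta}| = t_x, \qquad |\wideparen{\eta_\theta x_\theta}| = L - t_x,
\]
and analogously for $y$, so that
\[
d_\theta(x, y) \;=\; \log\frac{(L-t_x)\, t_y}{(L-t_y)\, t_x} \;=\; \Psi(t_x) - \Psi(t_y),
\]
where $\Psi(t) := \log\frac{L - t}{t}$ is a strictly decreasing homeomorphism $(0, L) \to \mathbb{R}$. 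The ``WLOG'' cyclic order $\eta_\theta, y_\theta, x_\theta, \xi_\theta$ corresponds precisely to $t_x \le t_y$, under which $\Psi(t_x) \ge \Psi(t_y)$ and the log is non-negative. Dropping the convention and writing symmetrically, $d_\theta(x,y) = |\Psi(t_x) - \Psi(t_y)|$.

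Defining $\Phi : P \to \mathbb{R}$ by $\Phi(x) := \Psi(t_x)$ therefore gives $d_\theta(x, y) = |\Phi(x) - \Phi(y)|$, the pullback under $\Phi$ of the Euclidean pseudo-metric on $\mathbb{R}$, so $d_\theta$ is automatically a pseudo-metric. As a byproduct, the failure of $d_\theta$ to separate points is precisely described: $d_\theta(x,y) = 0$ if and only if $x_\theta = y_\theta$, i.e.\ $x$ and $y$ lie on a common curve through $\theta$. The only mildly delicate point in the plan is checking that the ``WLOG'' convention in the definition of $d_\theta$ is compatible with symmetry; this reduces to the observation that swapping $x$ and $y$ inverts the cross ratio and hence flips the sign of the log, so the convention amounts to taking the absolute value as written above.
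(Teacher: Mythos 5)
Your proof is correct, and it reorganizes the paper's argument along a genuinely different (and tidier) axis. The paper checks the axioms separately: symmetry is declared evident, positivity comes from comparing the two ratios in (\ref{starTHETA}), and the triangle inequality is proved by a two-case analysis on the position of $z_{\theta}$ relative to $\wideparen{x_{\theta}y_{\theta}}$, with Case A a telescoping identity for the cross ratio and Case B reduced to Case A. Your route instead rests on the observation that all four arcs appearing in (\ref{starTHETA}) are subarcs of the single arc $\wideparen{\xi_{\theta}\eta_{\theta}}$ not containing $\theta$ --- which is legitimate, since every projection $x_{\theta}$ of a point of $P$ lands in $\wideparen{g_{1}(\theta)g_{2}(\theta)}$ by the defining minimality of $g_{1},g_{2}$ --- so that $d_{\theta}(x,y)=\left\vert \Phi(x)-\Phi(y)\right\vert$ for a single potential function $\Phi$. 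All three pseudo-metric axioms then follow at once from those of $\left\vert\,\cdot\,\right\vert$ on $\mathbb{R}$, the two cases are handled uniformly, and the degeneracy locus ($d_{\theta}(x,y)=0$ exactly when $x,y$ lie on a common curve ending at $\theta$, i.e.\ the content of (\ref{xytheta})) is read off immediately from injectivity of your $\Psi$. The paper's Case A equality is precisely your telescoping $\bigl(\Psi(t_{x})-\Psi(t_{z})\bigr)+\bigl(\Psi(t_{z})-\Psi(t_{y})\bigr)=\Psi(t_{x})-\Psi(t_{y})$, so the underlying computation is the same; what the paper's case-by-case presentation buys is the explicit additivity statement $d_{\theta}(x,z)+d_{\theta}(z,y)=d_{\theta}(x,y)$ for $z_{\theta}\in\wideparen{x_{\theta}y_{\theta}}$, which is reused verbatim in the proof of Theorem \ref{basicmetric}; your potential yields the same statement with one extra line. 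Your handling of the ``WLOG'' via inversion of the cross ratio under swapping $x$ and $y$ is also correct.
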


\begin{proof}
Clearly $d_{\theta}$ is symmetric. Moreover, if $x\neq y$ then for every $%
\theta$ which is not an endpoint of the curve $c_{xy}$ we have $%
x_{\theta}\neq y_{\theta}$ which implies
\begin{equation*}
\frac{\left \vert \wideparen{\eta_{\theta}x_{\theta}}\right \vert }{\left
\vert \wideparen{\eta_{\theta}y_{\theta}}\right \vert }>1\mathrm{\ and\ }%
\frac{\left \vert \wideparen{\xi_{\theta}y_{\theta}}\right \vert }{\left
\vert \wideparen{\xi_{\theta}x_{\theta}}\right \vert }>1\Longrightarrow
d_{\theta}\left( x,y\right) = \log \frac{\left \vert \wideparen{\eta_{%
\theta}x_{\theta}}\right \vert \left \vert \wideparen
{\xi_{\theta}y_{\theta}}\right \vert }{\left \vert \wideparen{\eta
_{\theta}y_{\theta}}\right \vert \left \vert \wideparen{\xi_{\theta}x_{%
\theta}}\right \vert }\gvertneqq \log1=0.
\end{equation*}
This shows that $d_{\theta}\left( x,y\right) \neq0$ if $x\neq y$ and $\theta$
is not an endpoint of $c_{xy}.$ Clearly,
\begin{equation}
  \textrm{\ if\ } \theta \textrm{\ is\ an\ endpoint\ of\ } c_{xy}
   \textrm{\ then\ } d_{\theta}\left( z,w\right) = 0  \textrm{\ for\ any\ pair\ of\ points\ }
z,w\in c_{xy}.
\label{xytheta}
\end{equation}
This is the reason $d_{\theta}$ is just a pseudo-metric and
not a metric. \newline
For the triangle inequality,  let $x,y,z\in P$ and observe that in the definition of $d_{\theta}$ the points involved are projected onto the boundary circle where the $\log$ of the cross ratio obeys the triangle inequality. We carry out the calculation by considering two cases (see Figure 4): 

\begin{description}
\item[Case A] $z_{\theta}\in \wideparen{x_{\theta}y_{\theta}}.$ In this case
the triangle inequality is, in fact, equality:
\begin{align}
d_{\theta}\left( x,z\right) +d_{\theta}\left( z,y\right) &= \log \left[
\eta_{\theta},z_{\theta},x_{\theta},\xi_{\theta}\right] + \log \left[
\eta_{\theta},z_{\theta},y_{\theta},\xi_{\theta}\right]  \notag \\
& =\log \left( \frac{\left \vert \wideparen{\eta_{\theta}x_{\theta}}\right
\vert\left \vert \wideparen{\xi_{\theta}z_{\theta}}\right \vert } {\left
\vert\wideparen{\eta_{\theta}z_{\theta}}\right \vert \left \vert %
\wideparen{\xi_{\theta}x_{\theta}}\right \vert } \cdot \frac{\left \vert %
\wideparen{\eta_{\theta}z_{\theta}}\right \vert \left \vert %
\wideparen{\xi_{\theta}y_{\theta}}\right \vert} {\left \vert %
\wideparen{\eta_{\theta}y_{\theta}}\right \vert \left \vert%
\wideparen{\xi_{\theta}z_{\theta}}\right \vert } \right)  \notag \\
&=\log \frac{\left \vert \wideparen{\eta_{\theta}x_{\theta}}\right \vert
\left \vert \wideparen{\xi_{\theta}y_{\theta}}\right \vert }{\left \vert %
\wideparen{\eta_{\theta}y_{\theta}}\right \vert \left \vert %
\wideparen{\xi_{\theta }x_{\theta}}\right \vert } =d_{\theta}\left(
x,y\right)  \label{triangleequality}
\end{align}

\item[Case B] $z_{\theta}\notin \wideparen{x_{\theta}y_{\theta}}.$ We may
assume that $z_{\theta}$ is contained in the interior of the arc $%
\wideparen{y_{\theta
}\eta_{\theta}}$ (the case $z_{\theta}\in \wideparen{\xi_{\theta}x_{\theta}}$
is treated in an identical manner) which implies that $y_{\theta}\in %
\wideparen{x_{\theta}z_{\theta}}.$ By Case A we have
\begin{equation*}
d\left(x,y\right) + d\left( y,z\right) =d\left( x,z\right) \Longrightarrow
d\left(x,y\right) \leq d\left( x,z\right)\leq d\left( x,z\right) +d\left(
z,y\right) .
\end{equation*}
\end{description}
\end{proof}

We now define a metric $d$ on $P.$

\begin{definition}
\label{defd}Consider a countable dense subset $\Theta =\left\{ \theta
_{i}|i\in \mathbb{N}\right\} $ of $\partial \mathbb{D}^{2}$ with the
following property: for each segment $\left[ v,w\right] $ in $\partial P,$
the endpoints of the curve $c_{vw}$ are contained in $\Theta .$ To each $%
\theta _{i}$ in $\Theta $ assign a positive real $w_{i}$ such that the
series $\sum\limits_{i}w_{i}$ converges. For $x,y$ $\in P$ define
\begin{equation*}
d\left( x,y\right) :=\sum\limits_{i=1}^{\infty }w_{i}\,d_{\theta _{i}}\left(
x,y\right) .
\end{equation*}%
Observe that for fixed $x,y$ $\in P$ the function $\theta \rightarrow
d_{\theta }\left( x,y\right) $ is, by Lemmata \ref{contin} and \ref{continn}%
, continuous on $\partial \mathbb{D}^{2}.$ It follows that the set $\left\{
d_{\theta _{i}}\left( x,y\right) \bigm \vert\theta _{i}\in \Theta \right\} $
is, by compactness of $\partial \mathbb{D}^{2},$ bounded by some $M>0$ and
thus%
\begin{equation*}
d\left( x,y\right) \leq M\sum\limits_{i=1}^{\infty }w_{i}<+\infty .
\end{equation*}
\end{definition}

\begin{proposition}\label{6one} $d$ is a metric on $P.$ \end{proposition}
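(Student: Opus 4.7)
The plan is to check the four metric axioms one by one, reusing that each $d_{\theta_i}$ is already a pseudo-metric by Lemma~\ref{dTHETA}. Three of the four axioms are essentially inherited for free from the term-by-term pseudo-metric structure, so the only genuine content is positive definiteness.

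First, finiteness of $d(x,y)$ is already established in Definition~\ref{defd}, so $d$ is a well-defined non-negative function. Symmetry $d(x,y)=d(y,x)$ follows immediately from the symmetry of each $d_{\theta_i}$ since the weights $w_i$ do not depend on the order of the arguments. The triangle inequality $d(x,z)\le d(x,y)+d(y,z)$ follows by summing the inequality $d_{\theta_i}(x,z)\le d_{\theta_i}(x,y)+d_{\theta_i}(y,z)$ against the positive weights $w_i$; the rearrangement of a sum of non-negative terms is legitimate here because of absolute convergence. Likewise $d(x,x)=0$ because every term $d_{\theta_i}(x,x)$ vanishes.

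The only point requiring care is to show $d(x,y)>0$ whenever $x\ne y$. Here one must beat the fact that each $d_{\theta_i}$ is only a pseudo-metric: by \eqref{xytheta}, $d_{\theta_i}(x,y)=0$ exactly when $\theta_i$ is an endpoint of the unique curve $c_{xy}$ guaranteed by (C1). Since $c_{xy}$ has at most two endpoints in $\partial\mathbb{D}^2$ and the set $\Theta$ is countably infinite, there is some (in fact all but at most two) $\theta_{i_0}\in\Theta$ that is not an endpoint of $c_{xy}$. For such $\theta_{i_0}$, Lemma~\ref{dTHETA} tells us $d_{\theta_{i_0}}(x,y)>0$, and since $w_{i_0}>0$ we get
\[
d(x,y)\;\ge\;w_{i_0}\,d_{\theta_{i_0}}(x,y)\;>\;0,
\]
as needed.

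I do not expect a serious obstacle: the heavy lifting (pseudo-metric property of each $d_{\theta_i}$, continuity of $\theta\mapsto d_\theta(x,y)$ ensuring summability, strict positivity off the endpoints of $c_{xy}$) has already been done in Lemmata~\ref{contin}, \ref{continn}, \ref{dTHETA} and in Definition~\ref{defd}. The mild subtlety is simply to notice that infinitely many $\theta_i\in\Theta$ avoid the two exceptional endpoints of $c_{xy}$, so the positivity of $d$ is automatic; density of $\Theta$ is not even required for this step (only that $\Theta$ is infinite), though density will be important later when identifying the geodesics.
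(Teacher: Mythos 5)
Your proposal is correct and follows essentially the same route as the paper: triangle inequality and symmetry are inherited termwise from Lemma~\ref{dTHETA}, and positivity for $x\ne y$ comes from picking any $\theta_{i}\in\Theta$ that is not one of the two endpoints of $c_{xy}$, for which $d_{\theta_i}(x,y)>0$. The paper states this more tersely but the content is identical, and your observation that only infinitude (not density) of $\Theta$ is needed here is accurate.
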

\begin{proof}
 The triangle inequality for $d$ follows from the triangle inequality of all $%
d_{\theta}$ proven in Lemma \ref{dTHETA}. Similarly, if $x\neq y$ then $%
d_{\theta}\left( x,y\right) \neq0$ for every $\theta\in \Theta$ which is not
an endpoint of the curve $c_{xy}$ (see the beginning of the proof of Lemma %
\ref{dTHETA}). Therefore, $d$ is a metric on $P.$
\end{proof}

\begin{proposition}\label{equivTOP}
The topology induced by $d$ is equivalent to the Euclidean topology.
\end{proposition}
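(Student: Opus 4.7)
I plan to prove equivalence of the two topologies by showing that a sequence $\{x_n\}\subset P$ satisfies $d(x_n,x)\to 0$ if and only if $|x_n-x|\to 0$, which suffices since both topologies are metric.

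For the direction that Euclidean convergence implies $d$-convergence, I would first observe that for each fixed $\theta\in\partial\mathbb{D}^{2}$ the pseudo-metric $d_\theta$ is continuous in $(x,y)\in P\times P$ with respect to the Euclidean topology: by Lemma \ref{contin} the projection $z\mapsto z_\theta$ depends continuously on $z$, so the four points $\eta_\theta,y_\theta,x_\theta,\xi_\theta$ appearing in formula (\ref{starTHETA}) move continuously with $(x,y)$, and since $P$ is open the projections $x_\theta,y_\theta$ lie strictly in the open arc between $\eta_\theta$ and $\xi_\theta$, so the cross-ratio is a finite positive number and its logarithm is continuous. Consequently $d_{\theta_i}(x_n,x)\to 0$ for every $i$. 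To interchange the limit with the infinite sum I need a uniform bound in $i$: I fix a closed Euclidean ball $K\subset P$ centred at $x$ containing $x_n$ for $n\ge N$, and note that by Lemmata \ref{contin} and \ref{continn} the function $(y,\theta)\mapsto d_\theta(x,y)$ is jointly continuous on the compact set $K\times\partial\mathbb{D}^{2}$, hence bounded by some $M$. Then $w_i d_{\theta_i}(x_n,x)\le M w_i$ for all $i$ and all $n\ge N$, and since $\sum_i w_i<\infty$ the dominated-convergence principle for series yields $d(x_n,x)=\sum_i w_i d_{\theta_i}(x_n,x)\to 0$.

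For the converse, assume $d(x_n,x)\to 0$ and suppose for contradiction that $x_n\not\to x$ in the Euclidean topology. Since $\overline{P}$ is compact and contained in $\mathbb{D}^{2}$, some subsequence $\{x_{n_k}\}$ converges Euclideanly to a point $y\in\overline{P}$ with $y\ne x$. The curve $c_{xy}\in\mathcal{C}$ has only two endpoints on $\partial\mathbb{D}^{2}$, so by density of $\Theta$ one can select $\theta_j\in\Theta$ that is not an endpoint of $c_{xy}$; then the projections $x_{\theta_j}$ and $y_{\theta_j}$ are distinct boundary points. By continuity of $f$ (Lemma \ref{contin}), $(x_{n_k})_{\theta_j}\to y_{\theta_j}$, so $(x_{n_k})_{\theta_j}$ eventually sits in a fixed small arc about $y_{\theta_j}$ bounded away from $x_{\theta_j}$. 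Inspecting formula (\ref{starTHETA}), this forces the cross-ratio $\left[\eta_{\theta_j},(x_{n_k})_{\theta_j},x_{\theta_j},\xi_{\theta_j}\right]$ to remain bounded away from $1$, so $d_{\theta_j}(x_{n_k},x)\ge\delta$ for some $\delta>0$ and all large $k$ (this remains valid even if $y\in\partial P$ so that $d_{\theta_j}(y,x)$ happens to be infinite). Hence $d(x_{n_k},x)\ge w_j d_{\theta_j}(x_{n_k},x)\ge w_j\delta>0$ for large $k$, contradicting $d(x_n,x)\to 0$.

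The main obstacle is the exchange of the limit and the infinite sum in the forward direction; the key input is the joint continuity of $(y,\theta)\mapsto d_\theta(x,y)$ on the compact $K\times\partial\mathbb{D}^{2}$, which furnishes the uniform dominating constant $M$. A minor technicality in the converse is that the subsequential Euclidean limit $y$ may lie on $\partial P$, where $d_{\theta}(y,x)$ can be $+\infty$ for some $\theta$; however, an infinite value only reinforces the positive lower bound used to reach the contradiction, so this does not cause any real difficulty.
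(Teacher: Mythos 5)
Your proof is correct and follows essentially the same route as the paper: joint continuity of $(\theta,w,z)\mapsto d_\theta(w,z)$ on a compact set supplies the uniform bound needed to pass the limit through the weighted series in the forward direction, and the converse is obtained by extracting a Euclidean-convergent subsequence and exhibiting a direction in $\Theta$ along which the pseudo-distance stays bounded below. The only difference is cosmetic: the paper bounds $d_\theta(z_n,z_0)$ below uniformly over a whole compact arc $N(\theta_0)$ of directions and sums over $\Theta\cap N(\theta_0)$, whereas you use a single $\theta_j\in\Theta$, which already suffices since $d\geq w_j\,d_{\theta_j}$.
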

\begin{proof}
 We first show that the Euclidean topology of $P$ is thinner than the topology induced by $d. $ For this, it suffices to show that \begin{equation}
\forall z_{0}\in P\mathrm{\ and\ }\varepsilon >0,\exists \rho >0\mathrm{\
such\ that\ }\left\vert z-z_{0}\right\vert <\rho \Rightarrow
d(z,z_{0})<\varepsilon .  \label{thinner1}
\end{equation}
Observe that, by Lemma \ref{contin}, property (\ref{thinner1}) holds
for the pseudo-metric $d_{\theta },$ namely, for fixed $\theta \in \partial
\mathbb{D}^{2}$
\begin{equation}
\exists \rho _{\theta }>0\mathrm{\ such\ that\ }\left\vert
z-z_{0}\right\vert <\rho _{\theta }\Rightarrow d_{\theta
}(z,z_{0})<\varepsilon .  \label{thinner}
\end{equation}
Let $N(z_{0})$ be a compact (in the Euclidean topology) neighborhood in $P$
containing $z_{0}.$ Let $F:\partial \mathbb{D}^{2}\times N(z_{0})\times
N(z_{0})\rightarrow \mathbb{R}$ be the function given by
\begin{equation*}
F(\theta ,w,z)=d_{\theta }(w,z)=\log \left( \frac{\left\vert %
\wideparen{\eta_{\theta}z_{\theta}}\right\vert }{\left\vert %
\wideparen{\eta_{\theta}w_{\theta}}\right\vert }\frac{\left\vert %
\wideparen{\xi_{\theta}w_{\theta}}\right\vert }{\left\vert %
\wideparen{\xi_{\theta}z_{\theta}}\right\vert }\right) .
\end{equation*}%
By Lemma \ref{continn}, the projection points $g_{1}(\theta )=\xi _{\theta }$
and $g_{2}(\theta )=\eta _{\theta }$ depend continuously on $\theta $ and by
Lemma \ref{contin} the same holds for every projection point $w_{\theta }.$
This shows that $F$ is continuous on $\partial\mathbb{D}^{2}\times N(z_{0})\times N(z_{0})$.
In particular,  (\ref{thinner}) holds. By compactness, $F$ is
uniformly continuous which implies that $\rho _{\theta }$ in (\ref{thinner})
can be chosen independent of $\theta .$ It follows that (\ref{thinner1})
holds.

We proceed with the proof of the proposition by showing the converse. For this it suffices to show that for any sequence
$\left\{ z_n \right\}$ converging to $z_0 $ with respect to the metric $d,$ we have $ \left\vert z_n -z_0\right\vert \rightarrow 0.$ Assume, on the contrary, that $\left\{ z_n \right\}$ is a sequence in $P$ with $d\left( z_0, z_n \right)\rightarrow0$ and $ \left\vert z_n -z_0\right\vert > \epsilon_0$ for some $\epsilon_0 >0.$

By choosing, if necessary, a subsequence we may assume that $\left\{ z_n \right\}$ converges (in the Euclidean sense) to a point $z^{\prime} .$ Let $\theta_0$ be a point in $\partial \mathbb{D}^2$ such that the line $c_{z_0 \theta_0} $ does not contain $z^{\prime} .$ Pick a compact Euclidean ball $B(z^{\prime})$ containing $z^{\prime}$ such that
\begin{equation} B(z^{\prime})\cap c_{z_0 \theta_0} =\emptyset . \label{mistake}\end{equation}
The image  $f_{z_0} \bigl( B(z^{\prime}) \bigr)$ of $B(z^{\prime})$ under the continuous map $f_{z_0}: \partial \mathbb{D}^2\rightarrow\partial \mathbb{D}^2$ is a compact and connected subset of $\partial \mathbb{D}^2 $ which, by (\ref{mistake}),    does not contain $\theta_0 .$ It follows that we may pick a compact subinterval $N\left( \theta_0 \right)$ of $\partial \mathbb{D}^2 $ containing $\theta_0$ and disjoint from
$f_{z_0} \bigl( B(z^{\prime}) \bigr) .$ By (\ref{xytheta}) we know that for any $\theta \in \partial \mathbb{D}^2 $ and $ x\in\mathbb{D}^2 $
\[  d_{\theta}(x,z_0)=0 \Longleftrightarrow x\in c_{z_0 \theta} \]
which implies that
\begin{equation}
 d_{\theta}(z,z_0)>0 \textrm{\ for\ all\ } \theta\in N\left( \theta_0 \right) \textrm{\ and\ } z\in  B(z^{\prime}).
\end{equation}
In other words, the restriction of the above defined map $F:\partial \mathbb{D}^{2}\times N(z_{0})\times N(z_{0})\rightarrow \mathbb{R}$ on the set $ B(z^{\prime}) \times  N\left( \theta_0 \right) \times \{ z_0\}$ does not attain the value $0\in \mathbb{R}.$ By continuity and compactness, let $M>0$ be the minimum of $F$ on $ B(z^{\prime}) \times  N\left( \theta_0 \right) \times \{ z_0\} .$

Let $ \Theta_K = \left\{ \theta_k \bigm\vert k=1,2,\ldots \right\}$ be an enumeration of the set $\Theta \cap  N\left( \theta_0 \right) $ for which we proved that
\[ d_{\theta_k} (z_n, z_0) >M \textrm{\ for\ all\ } z_n \textrm{\ and\ } \theta_k \in \Theta_K .  \]
Then
\[ d(z_n, z_0) = \sum\limits_{\theta_i \in \Theta} w_i d_{\theta_i}(z_n,z_0) \geq
 \sum\limits_{\theta_k \in \Theta_K} w_k d_{\theta_k}(z_n,z_0)> M \sum_k w_k >0
\]
which contradicts the assumption $d\left( z_0, z_n \right)\rightarrow0 .$
This completes the proof of the proposition.
\end{proof}

\begin{theorem}
\label{basicmetric}The metric space $\left( P,d\right) $ is a
geodesic metric space whose geodesics are precisely the curves $\left\{
c\cap P\bigm\vert c\in \mathcal{C}\right\} .$ In particular, $\left(
P,d\right) $ is uniquely geodesic.
\end{theorem}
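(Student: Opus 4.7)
The cornerstone is that for any $z$ on the $\mathcal{C}$-segment $[x,y]$ and any $\theta \in \partial \mathbb{D}^2$, the projection $z_\theta$ lies on the arc $\wideparen{x_\theta y_\theta}$---this is precisely Case A of Lemma \ref{dTHETA}, and follows from properties (C4)-(C5) forcing the three curves $c_{x\theta}, c_{z\theta}, c_{y\theta}$ to meet $\partial \mathbb{D}^2$ in the cyclic order inherited from the order of $x, z, y$ on $c_{xy}$; when $\theta$ is an endpoint of $c_{xy}$ all three $d_\theta$-values vanish so additivity is trivial. Summing against the weights $w_i$ gives $d(x,z) + d(z,y) = d(x,y)$, and coupled with continuity of $d(x, \cdot)$ from Proposition \ref{equivTOP} this makes $z \mapsto d(x,z)$ a strictly monotone homeomorphism from $[x,y]$ onto $[0,d(x,y)]$; its inverse is the desired isometric parametrization.

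For $c \cap P$ to realize an isometry $\mathbb{R} \to P$, its $d$-length must be infinite; I establish this by showing $d(x,x_n) \to \infty$ whenever $x_n \to v \in \partial P$ along $c$. The decisive use of the clause in Definition \ref{defd} that $\Theta$ contains both endpoints of $c_{v_i v_{i+1}}$ for every side of $P$ is here: if $v$ lies on the side $[v_i, v_{i+1}]$ and $\alpha \in \Theta$ is an endpoint of $c_{v_i v_{i+1}}$, then this line is an outermost line from $\alpha$ through $P$, so $v_\alpha$ equals $\xi_\alpha$ (or $\eta_\alpha$). As $x_n \to v$, the factor $|\wideparen{\xi_\alpha (x_n)_\alpha}|$ in the denominator of $d_\alpha(x, x_n)$ tends to $0$, forcing $d_\alpha(x, x_n) \to \infty$, and the single diverging summand $w_\alpha d_\alpha(x,x_n)$ drives $d(x, x_n) \to \infty$.

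The main obstacle is uniqueness: every geodesic segment between $x$ and $y$ must equal $[x,y]$. Let $\gamma:[0,d(x,y)] \to P$ be an isometric embedding with $\gamma(0) = x$ and $\gamma(d(x,y)) = y$, and fix $z = \gamma(t)$ with $0 < t < d(x,y)$. From $d(x,z) + d(z,y) = d(x,y)$ and the termwise inequalities $d_{\theta_i}(x,z) + d_{\theta_i}(z,y) \geq d_{\theta_i}(x,y)$, equality must hold in each term. Inspection of Case B of Lemma \ref{dTHETA} shows this collapses to equality only when $\theta_i$ is one of the four endpoints of $c_{xz}$ or $c_{yz}$, so Case A ($z_\theta \in \wideparen{x_\theta y_\theta}$) holds for every $\theta_i \in \Theta$ off that finite set. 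Let $\tau_1, \tau_2 \in \partial \mathbb{D}^2$ denote the endpoints of $c_{xy}$. The Case A condition is closed in $\theta$ wherever $x_\theta \neq y_\theta$, i.e., for $\theta \notin \{\tau_1,\tau_2\}$, so by continuity of the projection maps (Lemmata \ref{contin}, \ref{continn}) and density of $\Theta$ it extends to every $\theta \in \partial \mathbb{D}^2 \setminus \{\tau_1, \tau_2\}$. Letting $\theta \to \tau_1$: since $x_\theta, y_\theta \to \tau_2$, the arc $\wideparen{x_\theta y_\theta}$ shrinks to $\{\tau_2\}$, trapping $z_\theta \to \tau_2$, and continuity of $f$ yields $z_{\tau_1} = \tau_2$, forcing $c_{z \tau_1} = c_{\tau_1 \tau_2} = c_{xy}$ and $z \in c_{xy}$. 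Finally, $z \in c_{xy} \setminus [x,y]$ is excluded since such a $z$ would place $y$ (or $x$) strictly between the other two on $c_{xy}$, and the opening argument then puts the corresponding projection strictly between the other two for every $\theta \notin \{\tau_1, \tau_2\}$, contradicting Case A. Hence $z \in [x,y]$, establishing uniqueness and that the maximal geodesics are precisely $\{c \cap P : c \in \mathcal{C}\}$.
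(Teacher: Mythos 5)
Your proof is correct, and its existence half --- Case A additivity of each $d_{\theta}$ along $[x,y]$, summation against the weights $w_{i}$, and continuity from Proposition \ref{equivTOP} to produce the arclength parametrization --- is essentially the paper's argument. Two things differ. First, you prove inside the theorem that a ray $[x,v)$ with $v\in\partial P$ has infinite $d$-length, using an endpoint $\alpha\in\Theta$ of the line through the side containing $v$; the paper defers exactly this argument to Lemma \ref{rayINF}, so your organization is more self-contained given that ``geodesic'' here means an isometric image of $\mathbb{R}$. Second, and more substantively, your uniqueness step runs in the opposite logical direction from the paper's. The paper takes $z\notin c_{xy}$, picks an endpoint $\xi_{xy}$ of $c_{xy}$, notes $d_{\xi_{xy}}(x,y)=0<d_{\xi_{xy}}(x,z)$, and concludes a strict triangle inequality for $d$; you instead extract termwise equality $d_{\theta_{i}}(x,z)+d_{\theta_{i}}(z,y)=d_{\theta_{i}}(x,y)$ for all $\theta_{i}\in\Theta$, upgrade it by continuity and density to all $\theta\notin\{\tau_{1},\tau_{2}\}$, and let $\theta\rightarrow\tau_{1}$ to trap $z_{\tau_{1}}=\tau_{2}$, hence $z\in c_{xy}$. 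Both hinge on the same fact --- $d_{\tau_{1}}(x,\cdot)$ vanishes exactly on $c_{xy}$ --- but your version has a small advantage: the paper's displayed strict inequality at $\xi_{xy}$ contributes to the weighted sum only if $\xi_{xy}\in\Theta$, which is not guaranteed for arbitrary $x,y$, so the paper implicitly needs the same continuity-plus-density observation that you make explicit. The price is length: your limiting argument requires the closedness of the condition $z_{\theta}\in\wideparen{x_{\theta}y_{\theta}}$ away from $\tau_{1},\tau_{2}$, which you correctly ground in Lemmata \ref{contin} and \ref{continn}.
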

\begin{proof}
Let $\left[ x,y\right] $ be the segment with endpoints $x,y\in P.$ In other
words (see terminology introduced in property (C1)), $\left[ x,y\right] $ is
the restriction of $c_{xy}$ to an appropriate interval $I$ such that ${c_{xy}%
}_{\bigm\vert I}:I\rightarrow P$ has endpoints $x$ and $y.$ As all curves in
$\mathcal{C}$ are assumed to be continuous and injective with
respect to the Euclidean topology of $P$ so is the restriction  ${c_{xy}}_{\bigm\vert I}.$ \\
Proposition \ref{equivTOP} implies that
\begin{equation}
{c_{xy}}_{\bigm\vert I}:I\rightarrow P\text{\ is\ continuous\ with\ respect\
to \ the topology\ induced\ by\ }d.  \label{contC}
\end{equation}%
We will next show that
\begin{equation}
d(x,z)+d(z,y)=d(x,y) \text{\ for\ every\ } z\in[x,y].  \label{additive}
\end{equation}
Let $z\in \left[ x,y\right] .$ If $\theta$  is not an endpoint of the curve $
c_{xy},$ then $x_{\theta}\neq y_{\theta}$ and $z$ is contained in
the region bounded by $\left( \theta,x_{\theta} \right) ,\left( \theta,y_{\theta} \right) $ and the
arc $\wideparen{x_{\theta}y_{\theta}}$ (which is the arc not containing $\theta$).
By property (C5) $\left( \theta,z_{\theta} \right)$ does not intersect neither $\left( \theta,x_{\theta} \right)$ nor $\left( \theta,y_{\theta} \right) .$ It follows that $z_{\theta}\in
\wideparen{x_{\theta}y_{\theta}}.$ Moreover, the calculation (\ref%
{triangleequality}) carried out in Case A of Lemma \ref{dTHETA} holds
verbatim, that is,
\begin{equation}
d_{\theta}\left( x,y\right) =d_{\theta}\left( x,z\right) +d_{\theta }\left(
z,y\right) .   \label{sumequality}
\end{equation}
On the other hand, if $\theta$ is an endpoint of the curve $c_{xy},$ then $%
x_{\theta}=z_{\theta}=y_{\theta}$ and the above inequality holds trivially.
It follows that equality (\ref{sumequality}) holds for all $\theta \in \Theta
$ and hence $d\left( x,y\right) =d\left(x,z\right) +d\left( z,y\right) .$

The additive property (\ref{additive}) holds for any three points in $[x,y]$
which implies that
\begin{equation}
{c_{xy}}_{\bigm\vert I} :I\rightarrow P \mathrm{\ has\ finite\ length.}
\label{flength}
\end{equation}
The latter property along with (\ref{contC}) assert that ${c_{xy}}_{%
\bigm\vert I}$ can be parametrized by arclength. It is well known (see, for
example, Proposition 2.2.7 in \cite{Pap}) that a curve with arclength
parametrization and endpoints $x,y$ is a geodesic segment with respect to a
metric $d$ if and only if for every $z$ in the curve we have $%
d(x,z)+d(z,y)=d(x,y).$ This completes the proof that $\left[ x,y\right] $ is
a geodesic with respect to $d.$

Last we show that the segment $\left[ x,y\right] $ is the unique geodesic
segment with respect to $d$ joining $x,y.$ Assume, on the contrary, that
there exists a geodesic joining $x,y$ which contains a point $z\notin \left[
x,y\right] .$ By the previous discussion, we may assume that $y\notin \left[
x,z\right] $ and $x\notin \left[ y,z\right] .$  In other words, $z$ is not a
point of the curve $c_{xy} .$

Let $\xi_{xy}$ be one endpoint of $c_{xy} .$ Clearly,
\begin{equation*}
d_{\xi_{xy}}\left( x,y\right) =0\lvertneqq d_{\xi_{xy}}\left( x,z\right) .
\end{equation*}
By the triangle inequality for the pseudo-metric $d_{\xi_{xy}}$ we have
\begin{equation*}
d_{\xi_{xy}}\left( x,y\right) \lvertneqq d_{\xi_{xy}}\left(x,z\right)
+d_{\xi_{xy}} \left( z,y\right)
\end{equation*}
and
\begin{equation*}
d_{\theta}\left( x,y\right) \leq d_{\theta}\left( x,z\right)
+d_{\theta}\left( z,y\right) \mathrm{\ for\ \ all\ }\theta \neq \xi_{xy}.
\end{equation*}
It follows that $d\left( x,y\right) \lvertneqq d\left( x,z\right) +d\left(
z,y\right) $ which contradicts the fact that $z$ lies on a geodesic joining $%
x,y.$ \newline
\end{proof}

\begin{lemma}
\label{rayINF} Let $x\in P,$ $v$ a point in the boundary of $P$ and $\left[
x,v\right] $ the unique segment obtained from $\mathcal{C}.$ By the above
Theorem, $\left[ x,v\right) =\left[ x,v\right] \setminus\left\{ v\right\} $
can be viewed as a geodesic (with respect to $d$) ray $r_{v}$ of $P.$
Then, for any sequence $\left\{ y_{n}\right\} \subset \lbrack x,v)$
converging to $v$ in the Euclidean sense we have
\begin{equation*}
d\left( x,y_{n}\right) \rightarrow \infty \mathrm{\ as\ \ }n\rightarrow
\infty .
\end{equation*}%
In particular, the geodesic ray $r_{v}$ is realized by an isometry $r_{v}:
\left[ 0,\infty \right) \longrightarrow P$ and $\left( P,d\right) $ a
geodesically complete metric space.
\end{lemma}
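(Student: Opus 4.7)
The plan is to find a single index $k^{\ast} \in \mathbb{N}$ such that $d_{\theta_{k^{\ast}}}(x,y_{n}) \to \infty$; since $d(x,y_{n}) \geq w_{k^{\ast}}\, d_{\theta_{k^{\ast}}}(x,y_{n})$ with $w_{k^{\ast}} > 0$, this immediately forces $d(x,y_{n}) \to \infty$. The key is to exploit the distinguished property of $\Theta$ from Definition \ref{defd}: for every side $[v_{i},v_{i+1}]$ of $\partial P$, both endpoints of the curve $c_{v_{i}v_{i+1}}$ lie in $\Theta$. Since $\partial P$ is a finite union of segments, $v$ belongs to some such side, and I would choose $\theta^{\ast} := \theta_{k^{\ast}}$ to be an endpoint of $c_{v_{i}v_{i+1}}$.

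The heart of the argument is to show $v_{\theta^{\ast}} \in \{\xi_{\theta^{\ast}}, \eta_{\theta^{\ast}}\}$. By property (C3) the unique curve through $v$ with endpoint $\theta^{\ast}$ is $c_{v_{i}v_{i+1}}$ itself, so $v_{\theta^{\ast}}$ is the other endpoint of $c_{v_{i}v_{i+1}}$. By convexity of $P$ the curve $c_{v_{i}v_{i+1}}$ is a support line, hence one of the two outermost lines through $\theta^{\ast}$ used to define $g_{1},g_{2}$; we may assume $v_{\theta^{\ast}} = \eta_{\theta^{\ast}}$. Because $P$ is open and $c_{v_{i}v_{i+1}} \subset \partial P$, the interior point $x$ does not lie on $c_{v_{i}v_{i+1}}$, so $\theta^{\ast}$ is not an endpoint of $c_{xv}$ and $x_{\theta^{\ast}} \neq v_{\theta^{\ast}}$.

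The rest is continuity. As $y_{n} \to v$ in the Euclidean sense, Lemma \ref{contin} gives $y_{n,\theta^{\ast}} \to v_{\theta^{\ast}} = \eta_{\theta^{\ast}}$. Since each $y_{n}$ lies in $P$, its projection $y_{n,\theta^{\ast}}$ lies on the arc $\wideparen{\xi_{\theta^{\ast}}\eta_{\theta^{\ast}}}$ not containing $\theta^{\ast}$; so for large $n$ the cyclic order on $\partial \mathbb{D}^{2}$ reads $\eta_{\theta^{\ast}}, y_{n,\theta^{\ast}}, x_{\theta^{\ast}}, \xi_{\theta^{\ast}}$, and formula (\ref{starTHETA}) yields
\[
d_{\theta^{\ast}}(x,y_{n}) \;=\; \log \frac{|\wideparen{\eta_{\theta^{\ast}}x_{\theta^{\ast}}}|\,|\wideparen{\xi_{\theta^{\ast}}y_{n,\theta^{\ast}}}|}{|\wideparen{\eta_{\theta^{\ast}}y_{n,\theta^{\ast}}}|\,|\wideparen{\xi_{\theta^{\ast}}x_{\theta^{\ast}}}|} \;\longrightarrow\; +\infty,
\]
because $|\wideparen{\eta_{\theta^{\ast}}y_{n,\theta^{\ast}}}| \to 0$ while the remaining three arc-lengths remain bounded away from $0$.

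For the \emph{in particular} clause, Theorem \ref{basicmetric} already identifies $[x,v)$ as a geodesic of $(P,d)$, and the divergence just proven guarantees that its $d$-length is infinite, so it admits an arc-length reparametrization $r_{v} : [0,\infty) \to P$ which is an isometric embedding. Geodesic completeness then follows by applying the same argument to both rays issuing from any interior point along any curve in $\mathcal{C}$: both halves meet $\partial P$ and have infinite $d$-length, so the full line extends to an isometry $\mathbb{R} \to P$. I expect the only subtle point of the whole proof to be the verification that $v_{\theta^{\ast}}$ coincides with an \emph{outermost} projection rather than an arbitrary boundary projection; this is precisely what the convexity-plus-(C3) argument in the second paragraph delivers, and everything else is routine continuity and bookkeeping.
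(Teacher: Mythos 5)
Your proposal is correct and takes essentially the same route as the paper: both single out the endpoint $\theta^{\ast}\in\Theta$ of the curve extending the side of $\partial P$ containing $v$, identify its other endpoint with one of the outermost projections $\xi_{\theta^{\ast}},\eta_{\theta^{\ast}}$, and let the collapsing arc $\left\vert \wideparen{\eta_{\theta^{\ast}}\,y_{n,\theta^{\ast}}}\right\vert \rightarrow 0$ force $d_{\theta^{\ast}}(x,y_{n})\rightarrow\infty$, hence $d(x,y_{n})\geq w_{k^{\ast}}d_{\theta^{\ast}}(x,y_{n})\rightarrow\infty$. The only cosmetic difference is that you obtain $y_{n,\theta^{\ast}}\rightarrow v_{\theta^{\ast}}$ by invoking the continuity of $f$ at $(v,\theta^{\ast})$ from Lemma \ref{contin} (legitimate, since $v\in\partial P\subset\mathbb{D}^{2}$), whereas the paper re-derives this convergence by a direct separation argument using an auxiliary line through $\theta_{0}$ and a nearby boundary point $\theta_{\delta}$.
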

\begin{proof}
Let $\theta_{0}, \xi_{\theta_{0}}$ be the endpoints of the curve $c$ which
is the unique curve determined by the segment of $\partial P$ containing $v.$
By assumption in Definition \ref{defd}, $\theta_{0}\in \Theta$ and it
suffices to show that
\begin{equation*}
d_{\theta_{0}}(x,y_{n})\rightarrow \infty \mathrm{\ as\ \ }n\rightarrow
\infty.
\end{equation*}
We claim that $\left( y_{n}\right) _{\theta _{0}}\rightarrow \xi_{\theta_{0}}
$ or, equivalently, $\left\vert
\wideparen{ \xi_{\theta_{0}}
\left( y_{n}\right) _{\theta _{0}}}\right\vert \longrightarrow 0.$ To see
this, assume, on the contrary, that
\begin{equation}
\exists \varepsilon _{0}>0:\forall n\in \mathbb{N},\exists N>n\mathrm{\ \
with\ \ } \left\vert \wideparen{ \xi_{\theta_{0}} \left( y_{N}\right)
_{\theta _{0}}}\right\vert \geq \varepsilon _{0}.  \label{thetaDelta}
\end{equation}%
Pick a point $\theta _{\delta }\in
\wideparen{  \xi_{\theta_{0}} \left(
y_{1}\right) _{\theta _{0}}}$ such that $\left\vert \wideparen{
\xi_{\theta_{0}} \theta _{\delta }}\right\vert <\varepsilon _{0}.$ The line $
\left( \theta_{0} ,\theta _{\delta }\right) $ must intersect $\left[ x,v%
\right] $ at a point, say, $y_{\delta }.$ For all $n\in \mathbb{N}$ such
that $y_{n}\in \lbrack y_{\delta },v)$ we have $\left\vert \wideparen{
\xi_{\theta_{0}} \left( y_{n}\right) _{\theta _{0}}}\right\vert < \left\vert
\wideparen{ \xi_{\theta_{0}} \theta _{\delta }}\right\vert <\varepsilon _{0}$
contradicting (\ref{thetaDelta}).

This shows that $\left \vert
\wideparen{
 \xi_{\theta_{0}}
\left(  y_{n}\right)  _{\theta_{0}}}\right \vert \longrightarrow0$ or,
equivalently, $\log \frac{1}{\left \vert \wideparen{\xi_{\theta_{0}}\left(
y_{n}\right)  _{\theta_{0}}}\right \vert }\rightarrow \infty.$ As $\left
\vert \wideparen{\eta_{\theta_{0}}\left( y_{n}\right)  _{\theta_{0}}}\right
\vert < \left \vert \wideparen{\eta_{\theta_{0}}\xi_{\theta_{0}}}\right \vert
$ for all $n$ it follows
\begin{align}
d_{\theta_{0}}(x,y_{n})= & \log \frac { \left \vert \wideparen{\eta_{%
\theta_{0}}\left(y_{n}\right) _{\theta_{0}}}\right \vert \left \vert %
\wideparen{\xi_{\theta_{0} }x_{\theta_{0}}}\right \vert } { \left \vert %
\wideparen{\eta_{\theta_{0}}x_{\theta_{0}}}\right \vert \left \vert %
\wideparen{\xi_{\theta_{0}}\left( y_{n}\right) _{\theta_{0}}}\right \vert }
\leq \log \frac { \left \vert \wideparen{\eta_{\theta_{0}}\xi_{\theta_{0}}}%
\right \vert \left \vert \wideparen {\xi_{\theta_{0}}x_{\theta_{0}}}\right
\vert } { \left \vert \wideparen{\eta _{\theta_{0}}x_{\theta_{0}}}\right
\vert \left \vert \wideparen{\xi_{\theta_{0}}\left( y_{n}\right)
_{\theta_{0}}}\right \vert } =  \notag \\
& = \log \frac{\left \vert \wideparen{\eta_{\theta_{0}}\xi_{\theta_{0}}}%
\right \vert \left \vert \wideparen {\xi_{\theta_{0}}x_{\theta_{0}}}\right
\vert }{\left \vert \wideparen{\eta _{\theta_{0}}x_{\theta_{0}}}\right \vert
}+\log \frac{1}{\left \vert \wideparen {\xi_{\theta_{0}}\left( y_{n}\right)
_{\theta_{0}}}\right \vert }\rightarrow \infty  \notag
\end{align}
as required.
\end{proof}

In a similar manner the following can be seen: let $v,w$ be two points in
the boundary of $P$ contained in distinct segments of $\partial P.$ Then $%
(v,w) := c_{v,w} \cap P $ is a geodesic line in $P$ of infinite length.


\section{Generalizations}

In this Section let $\left( X,\rho \right) $ be a proper geodesic metric space
homeomorphic to $\mathbb{R}^{2}$ with the following uniqueness property:

\begin{enumerate}
\item[(G1)] for any two points $x,y\in X$ there exists a unique geodesic
segment $\sigma _{xy}:I\rightarrow X,$ where $I$ is an interval in $\mathbb{R},$ with endpoints $x$ and $y.$ We denote $\sigma _{xy}$ by $[x,y].$ Moreover,  every segment $\left[ x,y\right] $ extends uniquely to a geodesic line (isometry) $\sigma :\mathbb{R}\rightarrow X.$ We will be
saying that the line $\sigma $ contains the segment $\left[ x,y\right] .$
\end{enumerate}

Denote by $\partial X$ the boundary at infinity of $X$ defined via asymptotic geodesic rays (see, for example, \cite[page 260]{BrH}). We assume that $\partial X ,$ equipped with the topology of uniform convergence on compact sets, is homeomorphic to $\mathbb{S}^1$ and $\partial X$ compactifies $X$ so that $X\cup \partial X$ is homeomorphic to the closed unit disk.

We further assume that $X\cup \partial X$ satisfies the following properties
\begin{enumerate}
 \item[(G2)]  for every two points $\xi , \eta \in \partial X$ there exists a unique geodesic line in $X$ joining them.
  \item[(G3)] for every two points $x\in X$ and $\xi \in \partial X$ there exists a unique geodesic ray in $X$ joining them.
\end{enumerate}
The class of such geodesic metric spaces satisfying properties (G1), (G2) and (G3) includes universal coverings of closed surfaces of genus $\geq 2$ with a Riemannian metric of non-positive curvature.

We will say that $P$ is a convex subset of $X$ if for any two points $x,y\in
P$ the segment $\left[ x,y\right] $ is entirely contained in $P.$ We say
that $P$ is a convex polygon in $X$ if $P$ is an open bounded convex subset
of $X$ whose boundary is a finite union of geodesic segments.

As properties (G1), (G2) and (G3)  above are identical with properties (C1), (C2) and (C3) given  at the beginning of Section 2, the construction carried out in Section 2 can be applied verbatim with the class $\mathcal{C}$ being the geodesic lines in $X. $ This defines a new metric $d$ on the convex polygon $P$ such that $(P,d)$ is a geodesically complete  metric space whose geodesic lines are precisely the curves $\left\{c\cap P\bigm\vert c\textrm{\ geodesic\ line\ in\ }X\right\} .$

However, a proper geodesic metric space homeomorphic to $\mathbb{R}^{2}$ need not have a boundary satisfying property (C2). The Euclidean space $\mathbb{R}^{2}$ itself provides such an example. For this class of metric spaces we carry out in the next Subsection a  construction analogous to the one given in Section 2 by deploying a convex polygon containing the given polygon $P.$ In the special case of $\mathbb{R}^2$ we describe, in Subsection \ref{r2c}  below, an analogous procedure for putting a metric on a polygon $P$ which does not depend on the choice of a convex polygon containing $P.$

\subsection{Generalization to geodesic metric spaces}
Recall that $\left( X,\rho \right) $ denotes  a proper geodesic metric space
homeomorphic to $\mathbb{R}^{2}$ satisfying properties (G1)-(G3).
\begin{lemma}
Given any convex polygon $P$ in $X$ there exists a convex polygon $K$ in $X$ with $P\cup \partial P\subset K .$
\end{lemma}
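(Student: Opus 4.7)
The plan is to enlarge $P$ slightly outward from an interior point. Fix an interior point $p\in P$ and, for each vertex $v_i$ of $P$ ($i=1,\ldots,n$), use (G1) to extend the segment $[p,v_i]$ uniquely to a geodesic line; let $v_i^{\varepsilon}$ be the point on this extension at distance $\varepsilon>0$ beyond $v_i$, on the side opposite to $p$. Define $K_{\varepsilon}$ to be the open region bounded by the geodesic segments $[v_i^{\varepsilon},v_{i+1}^{\varepsilon}]$ (indices taken cyclically). The candidate for $K$ is $K_{\varepsilon}$ for all sufficiently small $\varepsilon>0$, and I would check in turn that $K_\varepsilon$ is a well-defined polygon, is convex, and contains $\overline P$.

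For well-definedness, I would verify that the segments $[v_i^{\varepsilon},v_{i+1}^{\varepsilon}]$ assemble into a simple closed curve for small $\varepsilon$; this follows from continuity of geodesic segments in their endpoints (a consequence of properness of $X$ and the uniqueness property (G1)) together with the fact that at $\varepsilon=0$ they form the Jordan curve $\partial P$ in $X\cong\mathbb{R}^2$. For convexity, let $\sigma_i^{\varepsilon}$ be the unique geodesic line containing $[v_i^{\varepsilon},v_{i+1}^{\varepsilon}]$, so that $\sigma_i^0=\sigma_i$ is the line extending the side $[v_i,v_{i+1}]$ of $P$. Since $P$ is convex it lies in one component $H_i^+$ of $X\setminus\sigma_i$, and $p\in\bigcap_i H_i^+$. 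Continuous dependence of $\sigma_i^{\varepsilon}$ on $\varepsilon$ gives, for small $\varepsilon$, analogous components $H_i^{\varepsilon,+}$ of $X\setminus\sigma_i^{\varepsilon}$ with $p\in\bigcap_i H_i^{\varepsilon,+}$; the key claim is then that this intersection is bounded and coincides with $K_{\varepsilon}$, which would exhibit $K_{\varepsilon}$ as convex.

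Containment $\overline P\subset K_\varepsilon$ is then quick: $p$ is interior to $K_\varepsilon$ and $v_i^{\varepsilon}\in\partial K_\varepsilon$, so the open segment $(p,v_i^{\varepsilon})$ lies in $K_\varepsilon$ by convexity, placing each $v_i$ in the open polygon $K_\varepsilon$; applying convexity to pairs of adjacent vertices gives $\partial P\subset K_\varepsilon$, and then every point of $P$ lies on a segment joining two points of $\partial P$, so $\overline P\subset K_\varepsilon$. I expect the main obstacle to be the convexity step: in the bare (G1)--(G3) setting, with no curvature or Busemann-convexity hypotheses, one must carefully establish continuity of geodesic lines in the pairs of points determining them (analogous to Lemma~\ref{contin} but adapted to the general space $X$) and then verify that for $\varepsilon$ sufficiently small the intersection $\bigcap_i H_i^{\varepsilon,+}$ is a bounded region agreeing with the polygon enclosed by the perturbed sides, rather than an unbounded convex set.
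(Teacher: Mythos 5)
Your outward--pushing construction is a genuinely different route from the paper's, and it is not wrong in spirit, but as written it defers precisely the step that needs an idea: the convexity of $K_{\varepsilon}$. You flag this yourself, but what you offer in its place is a chain of sub-claims, each of which is a lemma in its own right in the bare (G1)--(G3) setting. Concretely: (i) you need geodesic segments to vary continuously with their endpoints --- true in a proper uniquely geodesic space by an Arzel\`a--Ascoli plus uniqueness argument, but nowhere among the hypotheses; (ii) more delicately, you need the bi-infinite extensions $\sigma_i^{\varepsilon}$ to converge to $\sigma_i$: property (G1) gives \emph{uniqueness} of the extension of a segment to a line, not its continuous dependence on the segment, so this requires a second compactness-plus-uniqueness argument; (iii) you must still prove that each open half-plane $H_i^{\varepsilon,+}$ is convex (because a geodesic cannot meet a geodesic line twice, by (G1)) and that the Jordan domain bounded by the perturbed sides coincides with $\bigcap_i H_i^{\varepsilon,+}$. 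None of these statements is false, but none of them is proved, and together they constitute essentially the entire content of the lemma.

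The paper's proof avoids every limit by a single application of (G1). For the side $[A_i,A_{i+1}]$ it takes the cutting line $\ell_i$ through a point $B_i$ on the extension of the adjacent side $[A_{i-1},A_i]$ beyond $A_i$ and a point $C_i$ on the extension of $[A_{i+1},A_{i+2}]$ beyond $A_{i+1}$. If $\ell_i$ entered $P$ it would have to meet $\sigma_{i-1}$ in a point other than $B_i$, or $\sigma_{i+1}$ in a point other than $C_i$, contradicting uniqueness of geodesics; hence each $\ell_i$ misses $P$, and $K'=\bigcap_i \ell_i^{+}$ is a convex set containing $P\cup\partial P$ with no continuity input at all. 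The residual work there is that $K'$ may be unbounded and must be truncated by finitely many further lines --- the one place where your Jordan-domain definition is genuinely cleaner. If you want to salvage your write-up economically, replace the rays from an interior point by exactly this device: put the new vertices on the extensions of the sides themselves, so that disjointness of the new side-lines from $P$ is forced by uniqueness of geodesics rather than by a limiting argument; otherwise you must supply proofs of (i)--(iii) above.
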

\begin{proof}
 Denote by $A_{1},A_{2},\ldots A_{n}$ the vertices of $P.$ For each side $\left[
A_{i},A_{i+1}\right] $ ($i=1,\ldots ,n$ with $A_{n+1}\equiv A_{1}$) consider
the geodesic (with respect to the geometry of $X$) line $\sigma _{i}$
containing the side $\left[ A_{i},A_{i+1}\right] .$ \newline
Claim: for all $i,$ $\sigma _{i}\cap P=\varnothing .$

To check this assume $\sigma _{i}$ intersects $P.$ Then $P\setminus\sigma
_{i}$ consists of two or more components and denote by $P_{1}$ the component
whose boundary contains $\left[ A_{i},A_{i+1}\right] .$ If $P_{2}$ is an
other component of $P\setminus\sigma _{i}$ then at least one of the vertices 
$A_{i},A_{i+1}$ is not contained in $\partial P_{2}.$ Say, $A_{i}\notin
\partial P_{2}$ and pick any $x\in P_{2}.$ Then the geodesic segment $\left[
A_{i},x\right] $ must

\begin{enumerate}
\item[either,] contain $\left[ A_{i},A_{i+1}\right] $ which violates (G2)
because $x\notin \sigma _{i}$

\item[or, ] intersect $\sigma _{i}$ at a point $y\in \sigma _{i}$ which
violates (G1).
\end{enumerate}

\begin{figure}[ptb]
\begin{center}
\includegraphics[scale=1.55]{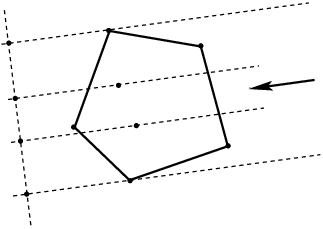}
\begin{picture}(0,0)
\put(-225,56){$y_{\theta}$}
\put(-146,68){$y$}
\put(-3,52){$ \ell_{\eta}$}
\put(-10,167){$ \ell_{\xi}$}
\put(-243,170){$ \ell$}
\put(-44,86){$\ell_y $}\put(-48,119){$\ell_x$}
\put(-161,111){$x$}
\put(-231,104){$x_{\theta}$}
\put(-7,109){$\theta$}
\put(-222,17){$\eta_{\theta}$}
\put(-236,144){$ \xi_{\theta}$}
\end{picture}\\[7mm]
\end{center}
\par
\label{figlast}
\caption{The strip which bounds $P$ in the $\theta$ direction and the projection points whose distances define $d_{\theta}$}
\end{figure}

Notation: for a geodesic line $h$ not intersecting $P$ we will be writing $%
h^{+}$ for the component of $X\setminus h$ which contains $P.$

We next construct a convex polygon $K$ containing $P.$ For each of the $n$ sides  $\left[ A_{i},A_{i+1}\right] $
of the polygon $P$ consider a geodesic (with respect to the geometry of $X$) line $\ell _{i}$
as follows:
choose points $B_{i} \in \sigma_{i-1}$ and $C_i \in \sigma_{i+1}$ such that
 \[ \left[ A_{i-1},B_{i}\right] =\left[ A_{i-1},A_{i}\right] \cup \left[ A_{i},B_{i}\right]
 \mathrm{\ and\ }
\left[ C_i , A_{i+2}\right] =\left[ C_i ,A_{i+1}\right] \cup \left[ A_{i+1},A_{i+2}\right]
 \]
Set $\ell _{i}$ to be the unique geodesic line containing $B_{i},C_{i} .$
Observe that for all $i,$ $\ell _{i}\cap P=\varnothing .$ To
check this note that if $\ell_i$ intersects $P$ then $\ell_i $ must intersect\\[3mm]
\hspace*{1cm}
\begin{minipage}{13cm}
\begin{enumerate}
 \item[either,]$\sigma _{i-1}$ at a point other than $B_{i}\in \sigma _{i-1}\cap \ell_{i}$
 \item[or,] $\sigma _{i+1}$ at a point other than $C_{i}\in \sigma_{i+1}\cap \ell _{i}.$
\end{enumerate}
\end{minipage}\\[3mm]
 In both cases we have, by (G1), a contradiction.

Set $\displaystyle K^{\prime }=\cap _{i=1}^{n}\ell _{i}^{+}.$ Clearly, $%
K^{\prime }$ is convex and contains $P.$ If $K^{\prime }$ is bounded we set
the desired bounded convex polygon $K$ to be $K^{\prime }.$ If $K^{\prime }$
is not bounded then each end $K_{j}^{\prime },j=1,\ldots ,k$ of $K^{\prime }$
is determined by two subrays $r_{j}$ and $r_{j+1}$ of $\ell _{j}$ and $\ell
_{j+1}$ respectively. Pick a line $\ell _{K_{j}^{\prime }}$ intersecting
both $r_{j}$ and $r_{j+1}$ and then the intersection 
\begin{equation*}
\displaystyle K=K^{\prime }\cap \left( \displaystyle\cap _{j=1}^{k}\ell
_{K_{j}^{\prime }}^{+}\right)
\end{equation*}%
is the desired bounded convex polygon containing $P.$ \end{proof}

Given any convex bounded polygon $P$ in $X$ consider and fix a convex
polygon $K$ containing $P$ in its interior. As explained above, such a
polygon always exists. The boundary $\partial K$ is homeomorphic to the
circle. We may now perform the construction described in Section \ref%
{sectionPrescribed} where the geodesic segments in the metric space $\left(
X,\rho \right) $ with endpoints on $\partial K$ constitute a collection of
curves satisfying properties (C1)-(C4).

The Euclidean length $\left \vert \wideparen{\theta_{i}\theta_{j}}\right \vert
$ used to define the pseudo-metric $d_{\theta}$ in $P$ is the only
adjustment needed: for points  $\theta _{1},\theta _{2},\theta _{3},\theta
_{4}$ in cyclic clockwise order in $\partial K$,  denote by $\wideparen{%
\theta_{i}\theta_{j}}$ the piece-wise geodesic curve in $\partial K$ with
endpoints  $\theta_{i},\theta_{j}$ and by $\left \vert \wideparen{%
\theta_{i}\theta_{j}}\right \vert $ its length with respect to the metric  $%
\rho$ of $X.$ Then the pseudo-metric $d_{\theta}$ is given by (\ref{starTHETA})
in an identical way.

The metric space $\left( P,d\right) $ obtained in this way is a geodesic
metric space whose geodesics are precisely the curves 
\begin{equation*}
\left\{ \sigma \cap P\bigm\vert\sigma \mathrm{\ is\ a\ geodesic\ line\ in\ }%
\left( X,\rho \right) \right\} .
\end{equation*}%
In particular, $\left( P,d\right) $ is uniquely geodesic (G1) and
geodesically complete (G2).

\subsection{The special case $\mathbb{R}^2$\label{r2c}}
Let $P$ be a convex polygon in Euclidean space $\mathbb{R}^2$. We may view $\mathbb{S}^1 \subset \mathbb{R}^2$ as the set of directions in $\mathbb{R}^2$ where each $x\in \mathbb{S}^1$ determines a unique angle $\theta \in \left[0,2\pi \right).$

For each direction $\theta$ there exist exactly two parallel lines, say $\ell_{\xi} , \ell_{\eta}$ such that the strip bounded by them contains  $P$ and the strip is minimal with respect to this property. Pick any line $\ell$ perpendicular to $\ell_{\xi} , \ell_{\eta}$ and denote by $\ell_x$ (resp. $\ell_y$) the line which contains $x$ (resp. $y$) and is parallel to $\ell_{\xi}$ (see Figure 5).
Set

$\xi_{\theta}$ the intersection point $\ell_{\xi}\cap \ell$

$\eta_{\theta}$ the intersection point $\ell_{\eta}\cap \ell$

$x_{\theta}$ the intersection point $\ell_{x}\cap \ell$

$y_{\theta}$ the intersection point $\ell_{y}\cap \ell$

\noindent We may define  $d_{\theta }$ as in Section 2, equation (\ref{starTHETA})
\begin{equation*}
d_{\theta }\left( x,y\right) :=\log \left[ \eta _{\theta },y_{\theta
},x_{\theta },\xi _{\theta }\right]
=
\log \frac{\left \vert\eta_{\theta}x_{\theta}\right \vert
\left \vert \xi_{\theta}y_{\theta}\right \vert }
{\left \vert \eta_{\theta}y_{\theta}\right \vert \left \vert \xi_{\theta}x_{\theta}\right \vert }
\end{equation*}
where $\left \vert\ \,\,\,\,\right \vert$ stands for Euclidean distance. As in Lemma \ref{dTHETA}, it can be seen that $d_{\theta}$ is a pseudo-distance on $P.$

 Let  $\Theta$  be a dense subset of $\mathbb{S}^1$ containing all
directions determined by the sides of $P.$  As in Definition \ref{defd}, to each $\theta _{i}$ in $\Theta $ assign a positive real $w_{i}$ such that the series $\sum\limits_{i}w_{i}$ converges. For $x,y$ $\in P$ define
\begin{equation*}
d\left( x,y\right) :=\sum\limits_{i=1}^{\infty }w_{i}\,d_{\theta _{i}}\left(
x,y\right) .
\end{equation*}
Working in an identical way as in Section 2 it can be seen that $d$ is a metric  making $P$ a geodesically complete metric space whose geodesic lines are precisely the (open) Euclidean segments in $P.$

\section{Further properties of $\left( P,d\right) $}

\noindent\ For a convex domain $U$ in $\mathbb{R}^{2}$ denote by $d_{%
\mathcal{H}}$ the Hilbert metric on $U$ for which we refer the reader to 
\cite[Ch.5, Section 6]{Pap}. Let $\left( P,d\right) $ be the metric space
obtained in Section \ref{sectionPrescribed} with the collection of curves $%
\mathcal{C}$ being the Euclidean lines.

\begin{theorem}
\label{nonisometric} For any convex domain $U$ in $\mathbb{R}^{2}$ equipped
with the Hilbert metric $d_{\mathcal{H}}$ the metric spaces $\left(
P,d\right) $ and $\left( U,d_{\mathcal{H}}\right) $ are not isometric.
\end{theorem}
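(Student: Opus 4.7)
My strategy is to reduce the statement to the claim that $d$ cannot coincide with the Hilbert metric of $P$ itself, and then produce a contradiction via a local Finsler computation.

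Suppose for contradiction that $\phi:(P,d)\to(U,d_{\mathcal{H}})$ is an isometry. Since both $(P,d)$ and $(U,d_{\mathcal{H}})$ are uniquely geodesic with geodesics precisely the (open) Euclidean segments intersected with the domain, $\phi$ is a homeomorphism sending Euclidean segments to Euclidean segments. A homeomorphism between two open convex subsets of $\mathbb{R}^{2}$ that maps line segments to line segments is the restriction of a projective transformation (a form of the fundamental theorem of projective geometry), so $\phi$ extends to a projective map carrying $P$ onto $U$. Since the Hilbert metric is invariant under projective maps between convex domains, this forces $d$ to coincide on $P$ with the Hilbert metric $d_{\mathcal{H},P}$ attached to $P$ itself viewed as a convex domain in $\mathbb{R}^{2}$; it thus suffices to contradict $d=d_{\mathcal{H},P}$.

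I would next compute the infinitesimal Finsler form of $d$ at an interior point $x\in P$. For a unit vector $v$, let $\alpha_{\theta}(v)$ denote the angle between $v$ and the direction from $x$ to $\theta$. A first-order expansion of (\ref{starTHETA}) in $t$ yields
\[ d_{\theta}(x,x+tv)=t\,C(x,\theta)\,|\sin\alpha_{\theta}(v)|+O(t^{2}),\]
where
\[ C(x,\theta)=\frac{2}{|x-\theta|}\left(\frac{1}{|\wideparen{\xi_{\theta}x_{\theta}}|}+\frac{1}{|\wideparen{\eta_{\theta}x_{\theta}}|}\right)>0\]
on the interior of $P$. Continuity of $\theta\mapsto d_{\theta}(x,y)$ on the compact $\partial\mathbb{D}^{2}$ (Lemmata \ref{contin} and \ref{continn}) gives uniform domination, so limit and weighted sum may be interchanged:
\[ F^{d}_{x}(v):=\lim_{t\to 0^{+}}\frac{d(x,x+tv)}{t}=\sum_{i}w_{i}\,C(x,\theta_{i})\,|\sin\alpha_{\theta_{i}}(v)|.\]
As a function of $v$ on the unit circle of directions, each summand $|\sin\alpha_{\theta_{i}}(v)|$ has a corner where $\alpha_{\theta_{i}}(v)=0$ (a jump of size $2$ in the one-sided derivative), while every other summand is smooth at that direction. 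Hence $F^{d}_{x}$ has a genuine corner of positive size $2w_{i}C(x,\theta_{i})$ at the direction of $\overrightarrow{x\theta_{i}}$ for every $\theta_{i}\in\Theta$.

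In contrast, the Hilbert Finsler form $F^{H,P}_{x}(v)=\tfrac{1}{2}\bigl(1/t^{+}(v)+1/t^{-}(v)\bigr)$, where $t^{\pm}(v)$ are the Euclidean distances from $x$ to $\partial P$ in directions $\pm v$, is piecewise smooth on the unit circle of directions and has corners only at the finitely many directions from $x$ to the vertices of $P$. Since $\Theta$ is dense in $\partial\mathbb{D}^{2}$, I may pick $\theta_{i}\in\Theta$ whose direction $\overrightarrow{x\theta_{i}}$ avoids these finitely many vertex directions; there $F^{d}_{x}$ has a corner while $F^{H,P}_{x}$ is smooth, contradicting $d=d_{\mathcal{H},P}$. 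The step I expect to be most delicate is the first reduction: the classical statement of the fundamental theorem of projective geometry concerns bijections of the whole projective plane, so one must invoke the version for homeomorphisms between open convex subsets that preserve line segments; once that is secured, the infinitesimal comparison is essentially routine.
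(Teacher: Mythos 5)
Your argument is essentially correct, but it follows a genuinely different route from the paper's, and two points need attention. First, you assert at the outset that $(U,d_{\mathcal{H}})$ is uniquely geodesic with geodesics the Euclidean chords; for a general convex domain this is false (a triangle with its Hilbert metric is a non--uniquely geodesic normed plane). The statement must instead be \emph{derived} from the contradiction hypothesis: $(P,d)$ is uniquely geodesic by Theorem \ref{basicmetric}, hence so is $(U,d_{\mathcal{H}})$ via the isometry, and since Euclidean chords are always Hilbert geodesics, uniqueness then forces every geodesic of $U$ to be a chord. Second, your reduction hinges on the fundamental theorem of projective geometry for injective, segment-preserving maps of open connected subsets of $\mathbb{R}^{2}$ (Shiffman's local version); this is a correct but nontrivial external input that must be cited precisely, and it is exactly the ingredient the paper avoids. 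Granting these, the rest is sound: the first-order expansion of (\ref{starTHETA}) does yield, via the inscribed-angle theorem, the coefficient $C(x,\theta)$ you state; the interchange of limit and sum and the term-by-term one-sided differentiation in the angular variable are justified by the uniform positive lower and upper bounds on $C(x,\cdot)$ and by uniform convergence of the derived series; and the resulting dense set of corners of $F^{d}_{x}$ is incompatible with the polygonal Hilbert norm, which is smooth off finitely many vertex directions. The paper argues quite differently and more elementarily: it proves that two parallel geodesic rays landing in the interior of the same side of $P$ remain at bounded $d$-distance while escaping to infinity, transports this to $U$ to force a segment in $\partial U$, repeats with an adjacent side of $P$ to force a second, non-collinear segment, and then contradicts unique geodesicity of $(U,d_{\mathcal{H}})$. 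Your approach buys a sharper local conclusion --- the Finsler unit balls of $d$ have dense corners, so $d$ is not even infinitesimally a polygonal Hilbert metric --- at the price of invoking projective rigidity; the paper's proof is self-contained, using only Euclidean trigonometry and the boundary behaviour of Hilbert geodesics.
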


\begin{proof}
Before dealing with the proof of the Theorem, we will state and prove a claim concerning
geodesic rays in $P$ which have endpoints in the same side of $P.$

Let $\left[ u,v\right] $ be a side of $P,$ where $u,v$ are vertices in $%
\partial P$, and consider two parallel (in the Euclidean sense) geodesic
rays $\left[ x,x_{\infty }\right] $ and $\left[ y,y_{\infty }\right] $ with $%
x_{\infty },y_{\infty }\in \left[ u,v\right] $ and $y_{\infty }\in \left[
x_{\infty },v\right] $ Pick sequences $\left\{ x_{n}\right\} _{n\in \mathbb{N%
}}\subset \left[ x,x_{\infty }\right] $ and $\left\{ y_{n}\right\} _{n\in 
\mathbb{N}}\subset \left[ y,y_{\infty }\right] $ such that for all $n\in 
\mathbb{N}$ the segment $\left[ x_{n},y_{n}\right] $ is parallel to $\left[
u,v\right] $ and, in addition, $x_{n}\rightarrow x_{\infty }$ and $%
y_{n}\rightarrow y_{\infty }.$ In other words, 
\begin{equation*}
\lim_{n\rightarrow \infty}d\left( x,x_{n}\right) =\infty =\lim_{n\rightarrow \infty}d\left( y,y_{n}\right) .
\end{equation*}%
\begin{figure}[tbp]
\begin{center}
\includegraphics[scale=1.30]{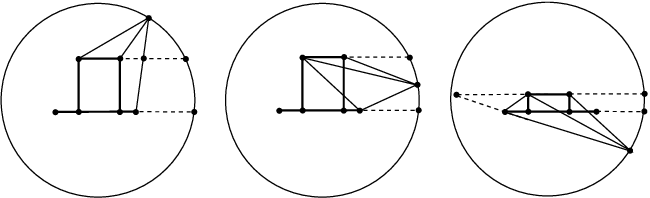} \vspace*{-4mm}

\hspace*{2.1cm}
\begin{picture}(5,0)
\put(-157,61){$\scriptstyle x_{\infty}$}
\put(-158,105){$\scriptstyle x_{\lambda}$}
\put(-133,105){$\scriptstyle y_{\lambda}$}
\put(-131,61){$\scriptstyle  y_{\infty}$}
\put(-82,102){$\scriptstyle  \theta_{xy} $}
\put(-76,65){$\scriptstyle \theta_{\infty}$}
\put(-107,129){$\scriptstyle \theta_{\lambda}$}
\put(-113,62){$\scriptstyle v$}
\put(-172,62){$\scriptstyle u$}
\put(-108,105){$\scriptstyle z_{\lambda}$}
\put(-17,61){$\scriptstyle x_{\infty}$}
\put(-18,106){$\scriptstyle x_{\lambda}$}
\put(9,107){$\scriptstyle y_{\lambda}$}
\put(9,61){$\scriptstyle  y_{\infty}$}
\put(58,102){$\scriptstyle  \theta_{xy} $}
\put(64,84){$\scriptstyle \theta_{\lambda}$}
\put(64,65){$\scriptstyle \theta_{\infty}$}
\put(27,62){$\scriptstyle v$}
\put(-32,62){$\scriptstyle u$}
\put(131,63){$\scriptstyle x_{\infty}$}
\put(122,84){$\scriptstyle x_{\lambda}$}
\put(149,84){$\scriptstyle y_{\lambda}$}
\put(84,83){$\scriptstyle z_{\lambda}$}
\put(149,61){$\scriptstyle  y_{\infty}$}
\put(206,79){$\scriptstyle  \theta_{xy} $}
\put(198,40){$\scriptstyle \theta_{\lambda}$}
\put(206,65){$\scriptstyle \theta_{\infty}$}
\put(175,62){$\scriptstyle v$}
\put(108,62){$\scriptstyle u$}
\put(-157,1){Case I}\put(-17,1){Case II}\put(122,1){Case III}
\end{picture}\\[7mm]
\end{center}
\par
\label{fig2}
\caption{The three cases considered in the Claim in the proof of Theorem 
\protect\ref{nonisometric}.}
\end{figure}

\noindent \textbf{Claim:} The set $\left\{ d\left( x_{n},y_{n}\right) 
\bigm
\vert n\in \mathbb{N}\right\} $ is bounded.\newline
Proof of Claim. As $d\left( x_{n},y_{n}\right) =\sum\limits_{k=1}^{\infty
}w_{k}\,d_{\theta _{k}}\left( x_{n},y_{n}\right) $ it suffices to show that 
\begin{equation*}
\left\{ d_{\theta _{k}}\left( x_{n},y_{n}\right) \bigm \vert k,n\in \mathbb{N%
}\right\} 
\end{equation*}%
is bounded. Assume that it is not. Then there must exist a sequence 
\begin{equation}
\left\{ d_{\theta _{k(\lambda )}}\left( x_{n(\lambda )},y_{n(\lambda
)}\right) \right\} _{\lambda =1}^{\infty }  \label{sequence}
\end{equation}%
converging to $\infty $ as $\lambda \rightarrow \infty .$ To simplify
notation we write $d_{\theta _{\lambda }}\left( x_{\lambda },y_{\lambda
}\right) $ instead of $d_{\theta _{k(\lambda )}}\left( x_{n(\lambda
)},y_{n(\lambda )}\right) .$ The Euclidean line extending the side $\left[
u,v\right] $ intersects the boundary of the unit disk in two points denoted
by $\theta _{\infty }$ and $\theta _{-\infty }.$ Clearly, $\left\{ d_{\theta
_{\lambda }}\left( x_{\lambda },y_{\lambda }\right) \right\} $ is bounded
for all $\theta _{\lambda }$ away from $\theta _{\infty }$ and $\theta
_{-\infty }.$ We will examine $\left\{ d_{\theta _{\lambda }}\left(
x_{\lambda },y_{\lambda }\right) \right\} $ for $\theta _{\lambda }$ close
to $\theta _{\infty }$ and an identical argument will work for $\theta
_{-\infty }.$ We may assume that the segment $\left[ u,\theta _{\infty }%
\right] $ contains $v.$ 
\begin{figure}[tbp]
\begin{center}
\includegraphics[scale=2.1]{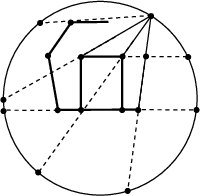} 
\begin{picture}(0,0)
\put(-126,76){$ x_{\infty}$}
\put(-136,145){$x_{\lambda}$}
\put(-94,145){$y_{\lambda}$}
\put(-56,145){$z_{\lambda}$}
\put(-88,76){$ y_{\infty}$}
\put(-12,138){$\theta_{xy} $}
\put(-3,82){$\theta_{\infty}$}
\put(-52,186){$\theta_{\lambda}$}
\put(-64,77){$ v$}
\put(-157,77){$ u$}
\put(-84,-6){$\eta_{\theta_{\lambda}}$}
\put(-228,81){$\theta_{-\infty}$}
\put(-238,99){$\left(x_{\lambda}\right)_{\theta_{\lambda}}$}
\put(-186,8){$\left(y_{\lambda}\right)_{\theta_{\lambda}}$}
\put(-177,183){$\xi_{\theta_{\lambda}}$}
\end{picture}\\[7mm]
\end{center}
\par
\label{fig3}
\caption{Relevant notation in Case I of the Claim in the proof of Theorem 
\protect\ref{nonisometric}.}
\end{figure}
\newline
We distinguish 3 cases as depicted in Figure 6.
Denote by $\theta _{xy}$ the intersection of $\partial \mathbb{D}^{2}$
with the extension of $\left[ x_{\lambda },y_{\lambda }\right] $ such that $%
y_{\lambda }\in \left[ x_{\lambda },\theta _{xy}\right] .$\newline
Case I: $\theta _{xy}\in \wideparen{\theta _{\lambda }\theta _{\infty }}.$%
\newline
Case II: $\theta _{\lambda }\in \wideparen{\theta _{xy}\theta _{\infty }}.$%
\newline
Case III: $\theta _{\infty }\in \wideparen{\theta _{\lambda }\theta _{xy}}.$%
\newline
We discuss in detail Case I (see Figure 7).
The distance $d_{\theta
_{\lambda }}\left( x_{\lambda },y_{\lambda }\right) $ is by definition (see
equations (\ref{starTHETAA}) and (\ref{starTHETA})) the sum of the
logarithms of two ratios 
\begin{equation*}
d_{\theta _{\lambda }}\left( x_{\lambda },y_{\lambda }\right) =\log \frac{%
\left\vert \wideparen{\eta _{\theta _{\lambda }}\left( x_{\lambda }\right)
_{\theta _{\lambda }}}\right\vert }{\left\vert \wideparen{\eta _{\theta
_{\lambda }}\left( y_{\lambda }\right) _{\theta _{\lambda }}}\right\vert }%
+\log \frac{\left\vert \wideparen{\xi _{\theta _{\lambda }}\left( y_{\lambda
}\right) _{\theta _{\lambda }}}\right\vert }{\left\vert \wideparen{\xi
_{\theta _{\lambda }}\left( x_{\lambda }\right) _{\theta _{\lambda }}}%
\right\vert }
\end{equation*}%
For large enough $\lambda $ both projections $\left( x_{\lambda }\right)
_{\theta _{\lambda }}$ and $\left( y_{\lambda }\right) _{\theta _{\lambda }}$
approach $\theta _{-\infty }$ and therefore the second summand is bounded for
all large enough $\lambda .$ We will reach a contradiction by showing that
the first summand or, equivalently, the ratio $x_{\lambda }\widehat{\theta
_{\lambda }}z_{\lambda }/z_{\lambda }\widehat{\theta _{\lambda }}y_{\lambda }
$ of the corresponding angles is bounded, where $z_{\lambda }$ is the point
where the extension of $\left[ x_{\lambda },y_{\lambda }\right] $ meets $%
\left[ \theta _{\lambda },v\right] .$  As $\sin a<a<2\sin a$ for small
enough $a$ we may work with the ratio 
\begin{equation*}
\frac{\sin \left( x_{\lambda }\widehat{\theta _{\lambda }}z_{\lambda
}\right) }{\sin \left( z_{\lambda }\widehat{\theta _{\lambda }}y_{\lambda
}\right) }
\end{equation*}%
Using the law of sines for the triangles $x_{\lambda }\theta _{\lambda
}z_{\lambda }$ and $y_{\lambda }\theta _{\lambda }z_{\lambda },$ we obtain 
\begin{equation*}
\frac{\sin \left( x_{\lambda }\widehat{\theta _{\lambda }}z_{\lambda
}\right) }{\sin \left( z_{\lambda }\widehat{\theta _{\lambda }}y_{\lambda
}\right) }=\frac{\left\vert x_{\lambda }z_{\lambda }\right\vert
\,\,\left\vert y_{\lambda }\theta _{\lambda }\right\vert }{\left\vert
y_{\lambda }z_{\lambda }\right\vert \,\,\left\vert x_{\lambda }\theta
_{\lambda }\right\vert }.
\end{equation*}

Clearly, the right hand side of the above equality converges to $\frac {%
\left \vert x_{\infty}v\right \vert \, \, \left \vert
y_{\infty}\theta_{\infty }\right \vert }{\left \vert y_{\infty}v\right \vert
\, \, \left \vert x_{\infty }\theta_{\infty}\right \vert }$ as $\lambda
\rightarrow \infty,$ which is a positive real number depending on the
Euclidean distances of the boundary points $x_{\infty},y_{\infty},v$ and $%
\theta_{\infty}.$ Therefore we have shown that the set
\begin{equation*}
\left \{ d_{\lambda}\left( x_{\lambda},y_{\lambda}\right) \bigm \vert %
\lambda \mathrm{\ satisfies\ Case\ I}\right \}
\end{equation*}
is bounded.

In Case II, adopting the same reasoning, we will reach a contradiction by
showing that the ratio $y_{\lambda}\widehat{\theta_{\lambda}}v$ /$x_{\lambda
}\widehat{\theta_{\lambda}}v$ is bounded. As 
\begin{equation*}
\frac{y_{\lambda}\widehat{\theta_{\lambda}}v}{x_{\lambda}\widehat {%
\theta_{\lambda}}v}=\frac{x_{\lambda}\widehat{\theta_{\lambda}}v+x_{\lambda }%
\widehat{\theta_{\lambda}}y_{\lambda}}{x_{\lambda}\widehat{\theta_{\lambda}}v%
}=1+\frac{x_{\lambda}\widehat{\theta_{\lambda}}y_{\lambda}}{x_{\lambda }%
\widehat{\theta_{\lambda}}v}
\end{equation*}
it suffices to bound the ratio 
\begin{equation*}
\frac{\sin \left( x_{\lambda}\widehat{\theta_{\lambda}}y_{\lambda}\right) }{%
\sin \left( x_{\lambda}\widehat{\theta_{\lambda}}v\right) }.
\end{equation*}
Using again the law of sines for the triangles $x_{\lambda}\theta_{\lambda
}y_{\lambda}$ and $x_{\lambda}\theta_{\lambda}v$ we obtain 
\begin{align}
\frac{\sin \left( x_{\lambda}\widehat{\theta_{\lambda}}y_{\lambda}\right) }{%
\sin \left( x_{\lambda}\widehat{\theta_{\lambda}}v\right) }&=\frac {\sin
\left( y_{\lambda}\widehat{x_{\lambda}}\theta_{\lambda}\right) }{\sin \left(
v\widehat{x_{\lambda}}\theta_{\lambda}\right) }\frac{\left \vert
x_{\lambda}y_{\lambda}\right \vert \, \, \left \vert \theta_{\lambda}v\right
\vert }{\left \vert y_{\lambda}\theta_{\lambda}\right \vert \, \, \left
\vert x_{\lambda }v\right \vert }  \notag \\
& < \frac{\sin \left( x_{\lambda}\widehat{\theta_{\lambda}}v\right) }{\sin
\left( v\widehat{x_{\lambda}}\theta_{\lambda}\right) }\frac{\left \vert
x_{\lambda}y_{\lambda}\right \vert \, \, \left \vert \theta_{\lambda}v\right
\vert }{\left \vert y_{\lambda}\theta_{\lambda }\right \vert \, \, \left
\vert x_{\lambda}v\right \vert }\overset{(\ast)}{=}\frac{\left \vert
x_{\lambda}v\right \vert }{\left \vert \theta_{\lambda }v\right \vert }\frac{%
\left \vert x_{\lambda}y_{\lambda}\right \vert \, \, \left \vert
\theta_{\lambda}v\right \vert }{\left \vert
y_{\lambda}\theta_{\lambda}\right \vert \, \, \left \vert x_{\lambda}v\right
\vert }=\frac{\left \vert x_{\lambda}y_{\lambda}\right \vert \, \,}{\left
\vert y_{\lambda}\theta_{\lambda}\right \vert \, \,}  \label{longsinelaw}
\end{align}
where the equality $(\ast)$ follows from the law of sines for the triangle $%
x_{\lambda}\theta_{\lambda}v$ and the inequality follows from the fact the
angle $x_{\lambda}\widehat{\theta_{\lambda}}v$ is always (in Case II)
strictly larger than $y_{\lambda}\widehat{x_{\lambda}}\theta_{\lambda} .$ 

The right hand side of (\ref{longsinelaw}) clearly converges to $\frac{%
\left
\vert x_{\infty }y_{\infty}\right \vert \,}{\left \vert
y_{\infty}\theta_{\infty}\right \vert }$ as $\lambda \rightarrow \infty,$
and as before, it follows that the set 
\begin{equation*}
\left \{ d_{\lambda}\left( x_{\lambda},y_{\lambda}\right) \bigm \vert %
\lambda \mathrm{\ satisfies\ Case\ II}\right \}
\end{equation*}
is bounded.

In Case III observe that the extension of the segment $\left[ x_{\lambda
},y_{\lambda}\right] $ intersects $\left[ \theta_{\lambda},u\right] $ at a
point $z_{\lambda}$ which, for sufficiently large $\lambda,$ lies inside the
unit disk. As in Case I, we use the law of sines for the triangles $%
y_{\lambda}\theta_{\lambda}z_{\lambda}$ and $x_{\lambda}\theta_{\lambda}z_{%
\lambda}$ to obtain 
\begin{equation*}
\frac{\sin \left( y_{\lambda}\widehat{\theta_{\lambda}}z_{\lambda}\right) }{%
\sin \left( x_{\lambda}\widehat{\theta_{\lambda}}z_{\lambda}\right) }=\frac{%
\left \vert y_{\lambda }z_{\lambda}\right \vert \, \, \left \vert
x_{\lambda}\theta_{\lambda}\right \vert }{\left \vert
x_{\lambda}z_{\lambda}\right \vert \, \, \left \vert y_{\lambda
}\theta_{\lambda}\right \vert }\longrightarrow \frac{\left \vert y_{\infty
}u\right \vert \, \, \left \vert x_{\infty}\theta_{\infty}\right \vert }{%
\left \vert x_{\infty}u\right \vert \, \, \left \vert
y_{\infty}\theta_{\infty}\right \vert }\mathrm{\ as\ }\lambda \rightarrow
\infty.
\end{equation*}

This completes the proof of the Claim.

We return now to the proof of Theorem \ref{nonisometric}. Assume, on the
contrary, that $F:\left( P,d\right) \rightarrow \left( U,d_{\mathcal{H}%
}\right) $ is an isometry. We write $a^{\prime }$ for the image $F\left(
a\right) $ of a point $a\in P.$ The images of the geodesic rays $\left[
x,x_{\infty }\right] $ and $\left[ y,y_{\infty }\right] $ under $F,$ denoted
by $\left[ x^{\prime },x_{\infty }^{\prime }\right] $ and $\left[ y^{\prime
},y_{\infty }^{\prime }\right] $ respectively, are clearly geodesic rays in $%
U$ determining boundary points $x_{\infty }^{\prime },y_{\infty }^{\prime
}\in \partial U.$ Moreover, 
\begin{equation*}
\lim_{n\rightarrow \infty }  d_{\mathcal{H}}\left( x^{\prime },x_{n}^{\prime }\right) =\infty =\lim_{n\rightarrow \infty } d_{\mathcal{H}}\left( y^{\prime },y_{n}^{\prime }\right) .
\end{equation*}%
Let $\left[ x_{\infty }^{\prime },y_{\infty }^{\prime }\right] $ be the
Euclidean segment joining $x_{\infty }^{\prime }$ and $y_{\infty }^{\prime }.
$

If $\left( x_{\infty}^{\prime},y_{\infty}^{\prime}\right) $ is contained in $%
U$ then $d_{\mathcal{H}}\left( x_{n}^{\prime},y_{n}^{\prime}\right)
\rightarrow \infty$ as $n\rightarrow \infty,$ a contradiction by the Claim.

If $\left( x_{\infty}^{\prime},y_{\infty}^{\prime}\right) \nsubseteq U,$
then by convexity of $U,$ $\left( x_{\infty}^{\prime},y_{\infty}^{\prime
}\right) \subset \partial U,$ that is, $\partial U$ contains at least one
segment. In an identical way, we may perform the same construction starting
with the side $\left[ v,q\right] $ adjacent to $\left[ u,v\right] $ and
geodesic rays $\left[ z,z_{\infty}\right] $ and $\left[ w,w_{\infty }\right] 
$ with $z_{\infty},w_{\infty}\in \left[ v,q\right] .$ It follows that $%
\left( z_{\infty}^{\prime},w_{\infty}^{\prime}\right) $ determines again a
segment in $\partial U.$ If $x_{\infty}^{\prime},y_{\infty}^{\prime
},z_{\infty}^{\prime},w_{\infty}^{\prime}$ were collinear then the geodesic
line $\left( y_{\infty},z_{\infty}\right) \subset P$ would have an image $%
\left( y_{\infty}^{\prime},z_{\infty}^{\prime}\right) \subset U$ connecting
the points $y_{\infty}^{\prime},z_{\infty}^{\prime}$ contained in a segment
in $\partial U,$ which is impossible. It follows that $\partial U$ contains
two distinct segments and, thus, the metric space $\left( U,d_{\mathcal{H}%
}\right) $ is not uniquely geodesic, a contradiction by Proposition \ref%
{basicmetric}. 
\end{proof}
{\bf Acknowledgments} The authors would like to thank the anonymous referee for
very helpful comments and suggestions which significantly improved this paper.\\[3mm]


Conflict of Interest statement: On behalf of all authors, the corresponding
author states that there is no conflict of interest.\newline
\noindent Data Availability Statement: Data sharing not applicable to this
article as no datasets were generated or analyzed during the current study.

\end{document}